\definecolor{shadecolor}{gray}{0.875}
\definecolor{col}{RGB}{42, 95, 151}
\theoremstyle{plain}
\newtheorem{theorem}{Theorem}[section]
\newtheorem*{lemma*}{Lemma}
\newtheorem{lemma}[theorem]{Lemma}
\newtheorem*{theorem*}{Theorem}
\newtheorem{proposition}[theorem]{Proposition}
\newtheorem*{proposition*}{Proposition}
\newtheorem{corollary}[theorem]{Corollary}
\newtheorem*{corollary*}{Corollary}
\newtheorem{conj}{Conjecture}
\theoremstyle{definition}
\newtheorem{remark}[theorem]{Remark}
\newtheorem*{remark*}{Remark}
\newtheorem*{definition*}{Definition}
\newtheorem*{example*}{Example}
\newtheorem{example}[theorem]{Example} 
\newtheorem*{question*}{Question}
\def\min{\operatorname{min}}
\def\max{\operatorname{max}}
\def\c1{\operatorname{c_1}}
\def\c2{\operatorname{c_2}}
\def\CC{{\mathbb C}}
\def\ZZ{{\mathbb Z}}
\def\QQ{{\mathbb Q}}
\def\PP{{\mathbb P}}
\def\D{{\mathcal D}}
\def\O{{\mathcal O}}
\def\H{{\mathscr H}}
\def\M{{\mathcal M}}
\def\g{\mathfrak g}
\def\+{\oplus}                   
\def\*{\otimes}
\def\Pic{\operatorname{Pic}}
\def\Hom{\operatorname{Hom}}
\def\Supp{\operatorname{Supp}}
\def\Supp{\operatorname{Supp}}
\def\Def{\operatorname{Def}}
\renewcommand\Delta{B}
\title{Curve classes on irreducible holomorphic\\ symplectic varieties}
\author{Giovanni Mongardi and John Christian Ottem}
\date{}
\begin{document}

\maketitle
\thispagestyle{empty}
\vspace{-0.7cm}
\begin{abstract}
We prove that the integral Hodge conjecture holds for 1-cycles on irreducible holomorphic symplectic varieties of K3 type and of Generalized Kummer type. As an application, we give a new proof of the integral Hodge conjecture for cubic fourfolds.
\end{abstract}

\vspace{0.4cm}
\thispagestyle{empty}


\def\X{\mathcal X}
\def\alg{\text {alg}}
Let $X$ be a smooth complex projective variety of dimension $n$. Write $H^{p,p}(X,\ZZ)=H^{2p}(X,\ZZ)\cap H^{p,p}(X,\CC)$ for the group of integral degree $2p$ Hodge classes. We say that the {\em integral Hodge Conjecture} holds for $k$-cycles on $X$ if  $H^{n-k,n-k}(X,\ZZ)$ is generated by classes of $k$-dimensional algebraic subvarieties on $X$. While the usual Hodge conjecture predicts that this statement should be true with $\QQ$-coefficients, it is known that the integral Hodge conjecture can fail in general, even for 1-cycles \cite{benoistottem,kollar,totaro,VoisinHC}. For 1-cycles, the validity of the conjecture depends very much on the birational properties of $X$ (in particular the group of degree $2n-2$ Hodge classes modulo algebraic classes is a birational invariant). For instance, Voisin showed that the conjecture holds for threefolds that are either uniruled; or satisfy $K_X=0$ and $H^{2}(X,\O_X)=0$ \cite{Voisin}. For varieties with $K_X=0$ of higher dimension (in particular, abelian fourfolds), not much is known.

In this paper we prove that the integral Hodge conjecture holds for 1-cycles on certain irreducible holomorphic symplectic varieties. We consider varieties of K3-type (deformation equivalent to Hilbert schemes of points on K3 surfaces) and generalized Kummer type (see Section \ref{prelim}). Our main theorem is the following:

\begin{theorem}\label{IHC}
Let $X$ be a projective holomorphic symplectic variety of K3--type or of generalized Kummer type. Then the integral Hodge conjecture holds for 1-cycles on $X$.
\end{theorem}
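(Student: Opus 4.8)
Set $d = \dim_{\CC} X$ (necessarily even). By Poincaré duality the class of a curve lives in $H^{2d-2}(X,\ZZ)$, and the assertion is that every element of $H^{d-1,d-1}(X,\ZZ)$ is an integral combination of curve classes. The plan is first to isolate, using the Hodge-theoretic structure of the cohomology ring, exactly which such classes are not automatically algebraic, and then to realize those remaining classes by explicit cycles after specializing within the deformation family. Throughout I would use that the defect group $H^{2d-2}(X,\ZZ)_{\mathrm{Hodge}}/\langle\text{curves}\rangle$ is a birational invariant (as noted above) and, by Voisin's deformation results for $1$-cycles, constant in projective families; it therefore suffices to verify algebraicity on a single well-chosen member of each deformation class.

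First I would dispose of the classes coming from $H^2$. By the Lefschetz theorem on $(1,1)$-classes every element of $H^{1,1}(X,\ZZ)$ is the class of a divisor, and intersecting $d-1$ divisors produces algebraic $1$-cycles; hence the image of $\Sym^{d-1}\big(H^{1,1}(X,\ZZ)\big)$ in $H^{2d-2}(X,\ZZ)$ consists of algebraic classes. The key structural input is then the description, due to Verbitsky and to Looijenga--Lunts--Verbitsky, of $H^*(X,\QQ)$ as a module over the Lie algebra $\mathfrak{g}=\mathfrak{so}\big(H^2(X,\QQ)\oplus U\big)$. This decomposes $H^{2d-2}(X,\QQ)$ into the subspace generated by $H^2(X,\QQ)$, whose Hodge classes are the intersections of divisors just discussed, together with a complementary ``transcendental'' summand built from the transcendental lattice $T$ of the underlying K3 or abelian surface. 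Over the very general member of a polarized family, $T$ is an irreducible Hodge structure and the complementary summand contributes essentially a single extra Hodge class, namely the one determined by the Beauville--Bogomolov--Fujiki form sitting inside $\Sym^2 T$. These are the classes that demand genuine geometric input.

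To produce algebraic representatives for this remaining piece I would exploit the model-dependent geometry of each type. For K3-type I would realize $X$ as (a deformation of) a moduli space of stable sheaves on a K3 surface $S$ and use the universal sheaf, through the associated Fourier--Mukai correspondence, to transport algebraic classes from $S$ --- where $1$-cycles are divisors and hence algebraic --- to the transcendental part of $H^{2d-2}(X)$. For generalized Kummer type I would pass to a special $X$ admitting a Lagrangian fibration $X\to\PP^{d/2}$, or work directly with the ambient abelian surface: the fibres, their multisections, and the abelian-variety structure on the fibres then supply an abundance of explicit curve classes covering the transcendental summand.

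The main obstacle I anticipate is the passage from rational to integral coefficients. The Lie-algebra decomposition is available only with $\QQ$-coefficients, so it controls the Hodge classes merely up to isogeny; showing that the \emph{integral} generators, and not just some multiple, are algebraic requires pinning down denominators. Concretely, this means exhibiting honest curves --- lines in the exceptional loci of Hilbert--Chow morphisms, fibres of Lagrangian fibrations, and images of curves under the Fourier--Mukai kernel --- that generate the relevant sublattice of $H^{2d-2}(X,\ZZ)$ on the nose, together with a control of possible torsion in the cokernel of the cycle class map. Establishing that the distinguished transcendental Hodge class is realized integrally, rather than only rationally, is where the real work lies.
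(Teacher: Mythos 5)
Your outline contains two genuine gaps, and they are precisely the two places where the theorem actually lives. First, the reduction ``the defect group is constant in projective families, so it suffices to verify algebraicity on a single well-chosen member of each deformation class'' is not valid: integral Hodge classes of degree $2d-2$ jump in families (the very general deformation has $H^{d-1,d-1}(X,\ZZ)$ of rank one, while special members such as $S^{[n]}$ have much larger Hodge lattices), so checking one member says nothing about the others. The correct procedure, and the one the paper follows, is class-by-class: fix a primitive Hodge class $\gamma$ on the given $X$, deform the pair $(X,D_\gamma)$ (with $D_\gamma$ the divisor proportional to $\gamma$ under $\phi$) \emph{inside the Hodge locus of $\gamma$} to a pair $(S^{[n]},tH-(\mu/e)B)$ --- this is Lemma \ref{allthemuweneed}, which requires Eichler's criterion and monodromy results to show every orbit is reached --- produce a curve there, and then \emph{deform the curve back} to $X$. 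That return trip is not free; the paper secures it with Ran's theorem (Proposition \ref{ranprop}), which applies only because the curves are built to move in families of dimension exactly $2n-2$. Your proposal contains no mechanism for transporting the cycle from the special member back to $X$.

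Second, you correctly diagnose that divisor intersections and rational (LLV/Fourier--Mukai) arguments only control $H^{2d-2}(X,\ZZ)_{\mathrm{Hodge}}$ up to finite index --- indeed $\phi:H^2(X,\ZZ)\to H_2(X,\ZZ)$ is not surjective, so the primitive generators of the curve lattice are not integral combinations of complete intersections of divisors --- but you then defer exactly this point as ``where the real work lies.'' That deferred step is the theorem. The paper's answer is an explicit Brill--Noether construction: a nodal curve $C\in|H|$ on $S$ with $\delta$ non-neutral nodes and a $\g^1_n$ on its normalization induces a rational curve $R_C\subset S^{[n]}$ of \emph{primitive} class $H-(p-\delta+n-1)\tau$, and the results of Ciliberto--Knutsen (resp.\ Knutsen--Lelli-Chiesa--Mongardi in the Kummer case) give both existence and the required dimension count $2n-2$ for the range of $\mu$ needed in Lemma \ref{allthemuweneed}. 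Without some such supply of honestly primitive curves the argument does not close. A smaller structural point: in degree $2d-2$ there is no extra ``transcendental'' Hodge class coming from $\Sym^2 T$ to worry about, since $H^{2d-2}(X,\QQ)\cong H^2(X,\QQ)$ as Hodge structures via the Beauville--Bogomolov form and every Hodge class there is rationally dual to a divisor; the class of $q$ inside $\Sym^2 T$ lives in $H^4$ and is relevant to $2$-cycles on cubic fourfolds (Section \ref{applications}), not to $1$-cycles here.
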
In fact, the group $H^{2n-1,2n-1}(X,\ZZ)$ is generated by classes of rational curves. This can be extended as follows:
\begin{theorem}\label{integralmori}
Let $X$ be a projective holomorphic symplectic variety of K3 type or of generalized Kummer type. Then the semigroup of effective curve classes is generated (over $\mathbb{Z}$) by classes of rational curves.
\end{theorem}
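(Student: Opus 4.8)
The plan is to bootstrap Theorem~\ref{integralmori} from Theorem~\ref{IHC}. An effective $1$-cycle is a nonnegative sum of irreducible curves, so it is enough to prove that the class of every irreducible curve $C\subset X$ is a nonnegative integral combination of classes of rational curves. Theorem~\ref{IHC} already provides rational curves $R_1,\dots,R_k$ whose classes generate the full group $H^{2n-1,2n-1}(X,\ZZ)$; what is missing is the \emph{effectivity} of the coefficients, i.e.\ that the monoid of effective classes is generated by such $[R_i]$ as a semigroup with nonnegative integral coefficients. I would organize the argument around the cone of curves $\overline{NE}(X)\subset N_1(X)_\RR$ and the Beauville--Bogomolov--Fujiki form $q$, separating the boundary rays (negative and isotropic) from the interior positive cone $\{q>0\}$, which is always contained in $\overline{NE}(X)$.

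For the boundary, I would invoke the known description of the extremal rays of $\overline{NE}(X)$ for K3 type and generalized Kummer type (after Bayer--Hassett--Tschinkel, Hassett--Tschinkel, Mongardi, and Bayer--Macrì): each extremal ray not meeting the positive cone is spanned by the class of a rational curve, and the associated wall-crossing contractions produce these rational curves explicitly. This gives rational generators on the boundary; to pass from ``the ray is rational'' to ``the primitive effective generator of the ray is a rational curve class'' I would combine this with the group-generation of Theorem~\ref{IHC}, which forces the minimal effective class on each such ray to be represented by an actual rational curve.

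For the interior the strategy is deformation within the Hodge locus of $[C]$. The locus in moduli where $[C]$ stays of type $(1,1)$ is connected and positive-dimensional, and along it one may specialize $X$ to a model with more structure: a Hilbert scheme $\Hilb^n(S)$ of a K3 surface, a generalized Kummer variety, or a member carrying a Lagrangian fibration. On such a model $[C]$ decomposes into fibre and exceptional classes — all represented by rational curves (lines in the punctual Hilbert scheme, Hilbert--Chow contracted curves, fibres of the fibration) — together with classes coming from the underlying surface. On the surface side one uses the existence of rational curves in effective classes on K3 and abelian surfaces (via Chen--Gounelas--Liedtke and their analogues) to express these as nonnegative combinations of rational curves, the Picard rank one case being immediate from Theorem~\ref{IHC}. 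The decisive point is that both the decomposition and the curves survive the deformation back to $X$: covering families of rational curves on holomorphic symplectic varieties are stable along the Hodge locus of their class (Charles--Mongardi--Pacienza), so each rational curve found on the special model deforms to a rational curve on $X$ in the same class.

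The main obstacle is precisely this deformation step in the interior of the cone. Group-level generation transports painlessly, but effectivity does not: one must ensure that the nonnegative decomposition established on the special model is not destroyed when $[C]$ is transported to $X$, which requires that the relevant families of rational curves specialize and deform within the Hodge locus of $[C]$ without any coefficient changing sign. Controlling the positive cone $\{q>0\}$ — where the extremal-ray description gives no information and one genuinely needs existence results for rational curves in the interior — while keeping all coefficients integral rather than merely rational, is where the real work lies.
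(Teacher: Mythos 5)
Your plan has the right silhouette (specialize within the Hodge locus, produce rational curves on special models, treat negative classes via extremal contractions), but several of its load-bearing steps are either wrong or left unresolved, and they are exactly where the paper does its work. First, the interior: your proposal to decompose $[C]$ on a special model into ``classes from the surface plus fibre/exceptional classes, all represented by rational curves'' fails on both counts. An abelian surface contains no rational curves at all, so the generalized Kummer case cannot be reduced to rational curves on $S$; and even for $S^{[n]}$, effective classes typically have \emph{negative} $\tau$-coefficient (e.g.\ $H-(n-1)\tau$, the class of the $\g^1_n$-curve), so they are not nonnegative combinations of surface classes and Hilbert--Chow fibres. The paper instead constructs, for each monodromy orbit of primitive positive classes (Lemma \ref{allthemuweneed}), an actual rational curve of class $H-\mu\tau$ via Brill--Noether theory on nodal curves (Theorems \ref{thm:exist_K3}, \ref{thm:exist_GK}, Lemma \ref{allthecurvesweneed}) moving in a $(2n-2)$-dimensional family, and then deforms the stable map itself via Proposition \ref{ranprop}. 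This also dissolves the obstacle you flag as ``the real work'': there is no decomposition to transport, because the limit of a deforming stable map is automatically an effective chain of rational curves in the prescribed primitive class, so effectivity and integrality come for free in the positive cone.

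Second, the negative part of the cone is where the rational-versus-integral issue genuinely arises, and your proposal does not engage with it. An irreducible curve with $q(C)<0$ need not be extremal; the Cone theorem only gives $C=\sum a_iR_i$ with $a_i\in\QQ_{\ge 0}$ over extremal rational $R_i$, and ``combining with Theorem \ref{IHC}'' does not force these coefficients to be integers (group generation says nothing about effective decompositions). The paper proves integrality by choosing, for each $i$, a divisor $D_i$ positive on $R_i$ and negative on the other $R_j$, running the MMP for $(X,\epsilon D_i)$ to contract all $R_j$ with $j\neq i$, and reading off $a_i\in\ZZ$ from primitivity of the pushforward of $R_i$. Likewise, for an extremal negative class the statement that the primitive generator is the class of a rational curve is not a formal consequence of the extremal-ray literature: Lemma \ref{ratnegative} derives it from the MBM/wall-divisor deformation theory, the existence of a contraction, Wierzba's theorem that fibres normalize to $\PP^e$, and an explicit intersection computation showing the proportionality factor is an integer; the normal bundle of a line in such a fibre then gives the required $(2n-2)$-dimensional family. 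You would need to supply all of this to close the argument.
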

The above theorems apply in particular to the variety of lines on a cubic fourfold. In this setting, Shen \cite{Shen} proved that the integral Hodge conjecture is related to the algebraicity of the Beauville--Bogomolov form (see Section \ref{applications}). Using the incidence correspondence, we also give a new proof of the following result of Voisin:
\begin{corollary}\label{cubic4fold}
The integral Hodge conjecture holds for 2-cycles on cubic fourfolds.
\end{corollary}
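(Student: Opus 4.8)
The plan is to exploit the Fano variety of lines together with the integral Hodge conjecture for $1$-cycles established above. Let $Y\subset\PP^5$ be a smooth cubic fourfold and let $F=F(Y)$ be its variety of lines. By Beauville--Donagi, $F$ is a projective holomorphic symplectic fourfold of K3--type, so Theorem \ref{IHC} applies: the group $H^{3,3}(F,\ZZ)$ of degree-$6$ Hodge classes is generated by classes of (rational) curves. The bridge between $F$ and $Y$ is the incidence variety
$$
P=\{(y,\ell)\in Y\x F : y\in\ell\},\qquad p\colon P\to Y,\quad q\colon P\to F,
$$
where $q$ is a $\PP^1$-bundle and the fibres of $p$ are the curves of lines through a point. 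I will use the associated cylinder (sweep) homomorphism $\phi=p_*q^*\colon H^6(F,\ZZ)\to H^4(Y,\ZZ)$.

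The first observation is that $\phi$ takes algebraic classes to algebraic classes: if $C\subset F$ is a curve, then $q^*[C]$ is the class of the ruled surface $q^{-1}(C)\subset P$, and $p_*q^*[C]$ is the class of the surface $S_C=\bigcup_{\ell\in C}\ell\subset Y$ swept out by the corresponding lines. Combined with Theorem \ref{IHC} for $F$, this shows that every class in $\phi\bigl(H^{3,3}(F,\ZZ)\bigr)$ is represented by an algebraic $2$-cycle on $Y$. Since the class $h^2$ (a linear section, hence the class of a cubic surface in $Y$) is manifestly algebraic, the corollary reduces to the purely lattice-theoretic assertion
$$
\phi\bigl(H^{3,3}(F,\ZZ)\bigr)+\ZZ\,h^2 \;=\; H^{2,2}(Y,\ZZ).
$$

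To prove this I would first establish it rationally and then control the integral cokernel. Rationally, recall that the transpose correspondence induces the Beauville--Donagi isomorphism $\psi=q_*p^*\colon H^4(Y,\ZZ)_{\mathrm{prim}}\to H^2(F,\ZZ)_{\mathrm{prim}}$ of Hodge structures, and that on the hyperkähler fourfold $F$ hard Lefschetz (equivalently, the Beauville--Bogomolov form) provides an algebraic isomorphism $H^2(F,\QQ)_{\mathrm{prim}}\xrightarrow{\sim}H^6(F,\QQ)_{\mathrm{prim}}$, given by cup product with the square of a polarization $g$. Composing, one checks that $\phi$ restricted to the primitive part agrees, up to a nonzero scalar and a correction in $\QQ\,h^2$, with $\psi^{-1}$; in particular $\phi\otimes\QQ$ is surjective onto $H^4(Y,\QQ)_{\mathrm{prim}}$. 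This already recovers the Hodge conjecture with $\QQ$-coefficients.

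The main obstacle is integrality. The Lefschetz operator $L_g^2\colon H^2(F,\ZZ)_{\mathrm{prim}}\to H^6(F,\ZZ)_{\mathrm{prim}}$ is injective but generally far from surjective, so feeding only divisor classes through the sweep map does not suffice to generate $H^{2,2}(Y,\ZZ)$ on the nose. The point of invoking Theorem \ref{IHC} rather than the Lefschetz $(1,1)$-theorem on $F$ is precisely that it makes the whole of $H^{3,3}(F,\ZZ)$ available as algebraic input, which is what is needed to saturate the image of $\phi$. Concretely, I would compute the self-correspondence $\psi\circ\phi\colon H^6(F,\ZZ)\to H^2(F,\ZZ)$ in terms of the Beauville--Bogomolov class and the polarization $g$, and combine this with the integral lattice structure of $H^2(F,\ZZ)$ and the integrality of the Beauville--Donagi isomorphism to deduce the displayed equality over $\ZZ$ (this is essentially Shen's reformulation \cite{Shen} of the integral Hodge conjecture for $Y$ in terms of the algebraicity of the Beauville--Bogomolov form on $F$). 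Controlling this last index, rather than the rational statement, is where the real work lies.
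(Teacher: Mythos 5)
There is a genuine gap, and it sits exactly where you say ``the real work lies.'' Your reduction to the lattice-theoretic statement $\phi\bigl(H^{3,3}(F,\ZZ)\bigr)+\ZZ\,h^2=H^{2,2}(Y,\ZZ)$, followed by a rational argument and an unfinished analysis of the integral cokernel, is unnecessary and is never completed. The point you are missing is that the cylinder map $\phi=p_*q^*\colon H^6(F,\ZZ)\to H^4(Y,\ZZ)$ is itself an isomorphism of \emph{integral} Hodge structures: it is the transpose (under Poincar\'e duality, and both cohomology groups are torsion-free unimodular lattices since $Y$ and $F$ are simply connected) of the Beauville--Donagi Abel--Jacobi map $q_*p^*\colon H^4(Y,\ZZ)\to H^2(F,\ZZ)$, which \cite[Proposition 4]{BD} asserts is an isomorphism over $\ZZ$, not merely over $\QQ$. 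The transpose of an integral isomorphism of unimodular lattices is again an integral isomorphism, so every class in $H^{2,2}(Y,\ZZ)$ is $\phi(\gamma)$ for some $\gamma\in H^{3,3}(F,\ZZ)$; by Theorem \ref{IHC} such $\gamma$ is a combination of (rational) curve classes, and your own first observation (that $\phi$ sends a curve class to the class of the swept surface) finishes the proof. No correction by $h^2$ and no index computation are needed.

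Your worry about integrality stems from factoring $\phi$ through $H^2(F,\ZZ)$ via the Lefschetz operator $L_g^2$, which is indeed far from integrally surjective --- but that factorization is a detour you impose on yourself, not a feature of the problem. Relatedly, the decomposition $H^4(Y,\ZZ)=H^4(Y,\ZZ)_{\mathrm{prim}}\oplus\ZZ\,h^2$ you implicitly use does not hold over $\ZZ$ (the sublattice $\ZZ h^2\oplus H^4(Y,\ZZ)_{\mathrm{prim}}$ has finite index $3$ in $H^4(Y,\ZZ)$ since $(h^2)^2=3$), so even the statement you reduce to would need care. If you insist on the route through $H^2(F,\ZZ)$ and the Beauville--Bogomolov form, you are essentially reproving Shen's reformulation rather than using it, and that is a substantially harder path than the one-line transpose argument.
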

The proofs in this paper rely on several results and constructions that were already in the literature. In particular, Theorems \ref{IHC} and \ref{integralmori} involve a deformation argument similar to that in \cite{AV} and \cite{CP}. We first consider the Hilbert scheme of a K3 or a generalized Kummer variety, where we exhibit special families of rational curves that represent primitive classes in $H^{2n-2}(X,\ZZ)$, and which also deform in their Hodge loci. This then in turn implies that any integral degree $2n-2$ Hodge class on a deformation is represented by a rational curve.
%
%

We would like to thank O. Benoist, G. Pacienza, M. Shen and C. Vial for useful discussions. GM was supported by the project ``2013/10/E/ST1/00688'' and ``National Group for Algebraic and Geometric Structures, and their Application'' (GNSAGA - INdAM), and JCO was supported by an RCN grant  no. 250104.

\section{Preliminaries}\label{prelim}
We work over the complex numbers. An {\em irreducible holomorphic symplectic variety} (IHS) $X$ is a simply connected Calabi--Yau manifold carrying a non-degenerate holomorphic 2-form $\omega$ generating $H^{2,0}(X)$. $X$ carries the {\em Beauville--Bogomolov form} $q$, which is a non-degenerate quadratic form on $H^2(X,\ZZ)$. 

\subsection{Varieties of K3--type}
Let $S$ be a complex K3 surface and let $S^{[n]}$ denote its Hilbert scheme (or Douady space in the non-projective setting) of length $n$ subschemes of $S$. This is a smooth IHS variety of dimension $2n$. In general, we say that an IHS variety is of {\em K3-type} if it is deformation equivalent to $S^{[n]}$. For these varieties, the Beauville--Bogomolov form $q$ has signature $(3,20)$.

The (co)homology groups of $X=S^{[n]}$ of degree 2 and $4n-2$ are easy to describe. We have the Hilbert--Chow morphism $HC:S^{[n]}\to S^{(n)}$ which induces an injection $H^{2}(S^{(n)},\ZZ)\to H^2(X,\ZZ)$ and an injective map $i:H^2(S,\ZZ)\to H^2(X,\ZZ)$ obtained by symmetrizing a line bundle on $S$. From these we obtain decompositions
\begin{equation}
H^2(X,\ZZ)=H^2(S,\ZZ)\oplus \ZZ \Delta
\end{equation}\label{homologySm}and
\begin{equation}\label{decomphom}
H_2(X,\ZZ)=H_2(S,\ZZ)\oplus \ZZ \tau
\end{equation}where $\Delta=\frac12[E]$ is one half of the class of the exceptional divisor $E$ of $HC$ and $\tau$ is the class of a rational curve in a fiber of $HC|_E$. In particular, we see that the integral Hodge conjecture holds automatically for 1-cycles on $X$, since it holds on $S$.



Viewing $H_2(X,\ZZ)$ as $\Hom(H^2(X,\ZZ),\ZZ)$, we see that $q$ defines an embedding of lattices
\begin{equation}\label{duality}
\phi:H^2(X,\ZZ)\to H_2(X,\ZZ)=H^{4n-2}(X,\ZZ).
\end{equation}Over $\QQ$ this defines an isomorphism $\phi:H^2(X,\QQ)\to H_2(X,\QQ)$, and we extend $q$ to a $\QQ$-valued quadratic form on $H_2(X,\QQ)$ and $H_2(X,\ZZ)$. In this setting both of the decompositions above are orthogonal with respect to the form $q$.  For a divisor $D$, Fujiki's relation states that $D^{2n}=cq(D)^{n}$ for a rational constant $c>0$. This implies that the two integral degree $4n-2$ classes $D^{2n-1}$ and $\phi([D])$ are proportional in $H^{4n-2}(X,\ZZ)$ (unless $D^{2n}=0$). In particular, this means that the Hodge locus of $D$ coincides with the Hodge locus of any curve class proportional to it under the map $\phi$.

%
%
%
%

\subsection{Generalized Kummer varieties}
Let $S$ denote an abelian surface and let $n$ be a positive integer. We define the {\em generalized Kummer variety} $X=K_n(S)\subset S^{[n+1]}$ associated to $S$ as the fiber over  0 of the summation map $S^{[n+1]}\to S$. Smooth deformations of these varieties are said to be of generalized Kummer type. 

As in the previous case, we have a decomposition $$H^2(X,\ZZ)=H^2(S,\ZZ)\oplus \ZZ e$$ where $e=\frac 12 E$ where $E$ is the restriction of the Hilbert--Chow divisor on $S^{[n+1]}$, and 
$$H_2(X,\ZZ)=H_2(S,\ZZ)\oplus \ZZ \eta$$ where $\eta$ is the class of a minimal curve in the fibres of $HC_{|E}$. Both of these are  orthogonal with respect to the Beauville--Bogomolov form. 

\subsection{Deforming rational curves}

Let $X$ be an IHS variety of dimension $2n$ and let $f\,:\,\mathbb{P}^1\rightarrow X$ be a non-constant map. Let $R$ denote the image of $f$ and let $\Def(X,[R])$ be the sublocus of the local deformation space $\Def(X)$ where the class $[R]$ remains Hodge. The deformation theory of $R\subset X$ and the map $f$ is well-understood, by results of Ran \cite{Ran} and later, in this particular case, by Amerik and Verbitsky \cite{AV} and by Charles and Pacienza \cite{CP}. We can formulate the result we need using the Kontsevich moduli space $\overline \M_0(X,\beta)$ parameterizing stable maps $f:\PP^1\to X$ with image of class $f_*[\PP^1]=\beta$. If $f:\PP^1\to X$ is a finite map, then every component $\M\subseteq \overline {\M}_0(X,[R])$ containing the corresponding point $[f]$ has dimension  at least $2n-2$. Ran's results can then be summarized as follows:
\begin{proposition}{\cite[Corollaries 3.2, 3.3 and 5.1]{Ran}}\label{ranprop}
Let $X,R,f$ be as above. Suppose there is a component of $\overline{\mathcal{M}}_0(\mathbb{P}^1,[R])$ of dimension $2n-2$ containing $[f]$. Then the curve $R$ deforms in the Hodge locus $\Def(X,[R])$.
\end{proposition}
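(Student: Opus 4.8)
The plan is to study how the given family of rational curves deforms as the complex structure on $X$ varies along the Hodge locus, by setting up a relative Kontsevich space and proving that its projection to the base is dominant. First I would restrict the universal deformation $\pi\colon\X\to\Def(X)$ to the Hodge locus $B:=\Def(X,[R])$. Since $h^{2,0}(X)=1$, the period map realizes $\Def(X)$ as an open subset of the period quadric, and the condition that $[R]$ (equivalently, via $\phi$, the proportional divisorial direction) remain of Hodge type is the single equation $q(\sigma_t,[R])=0$ on the period $\sigma_t$; hence $B$ is a smooth hypersurface of codimension $1$. Over $B$ the class $\beta=[R]$ stays of type $(2n-1,2n-1)$, so by the Lefschetz $(1,1)$ theorem applied to the divisor $\phi([R])$ and transported back through $\phi$, it is algebraic on every fibre $\X_t$. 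I would then form the relative Kontsevich space $p\colon \mathcal K:=\overline{\M}_0(\X_B/B,\beta)\to B$, whose central fibre $p^{-1}(0)=\overline{\M}_0(X,\beta)$ contains the given component $\M$ of dimension exactly $2n-2$ through $[f]$.

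The core of the argument is an obstruction computation at $[f]$, and this is where Ran's corollaries enter. The projection $p$ is dominant near $[f]$ as soon as the obstruction map $\mathrm{ob}\colon T_0B\to H^1(\PP^1,f^*T_X)$ — obtained by restricting the natural pullback $H^1(X,T_X)\to H^1(\PP^1,f^*T_X)$, and measuring the failure to deform $f$ over a first-order deformation of the complex structure — vanishes. To see that it does, I would exploit that $R$ is isotropic: since $\Omega^2_{\PP^1}=0$, the pullback $f^*\omega$ vanishes, and the symplectic isomorphism $\omega\colon T_X\xrightarrow{\sim}\Omega_X$ therefore makes $f^*T_X\cong (f^*T_X)^\vee$ self-dual on $\PP^1$. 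This self-duality forces a symmetric splitting type and, together with the hypothesis that $\M$ has the minimal dimension $2n-2$ (one more than the virtual dimension $2n-3$, the excess being precisely the contribution of $\omega$), it identifies the obstructions to deforming $f$ with the part of $H^1$ annihilated by the symplectic pairing. Ran's Corollaries 3.2 and 3.3 carry out this identification of the tangent-obstruction theory of the stable map $f$, and Corollary 5.1 packages the resulting vanishing: along the Hodge locus the obstruction map is zero, so $p$ is smooth, hence dominant, at $[f]$.

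Granting the obstruction vanishing, the conclusion is immediate: the image of $p$ contains an open neighborhood of $0$ in $B=\Def(X,[R])$, so for every nearby $t$ the fibre $\X_t$ carries a stable genus-$0$ map of class $[R]$, i.e. the curve $R$ deforms over the whole Hodge locus. I expect the obstruction vanishing of the second paragraph to be the genuine obstacle: one must convert the excess-dimension hypothesis $\dim\M=2n-2$ into the statement that the one-dimensional cokernel produced by the symplectic form accounts for \emph{all} of the obstruction, so that nothing survives to impede deformation transverse to the central fibre. Checking that this self-duality interacts correctly with the normal-sheaf sequence $0\to T_{\PP^1}\to f^*T_X\to N_f\to 0$ for a merely finite (not necessarily embedded) $f$, and that the estimate holds uniformly near $[f]$, is the delicate point; the remainder is the dimension bookkeeping recorded above.
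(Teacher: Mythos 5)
This proposition is not proved in the paper at all: it is quoted directly from Ran (Corollaries 3.2, 3.3 and 5.1), and the surrounding text only reformulates it in terms of a family $\pi\colon\X\to T$. So the relevant comparison is with the argument of the cited source and its reworkings in Amerik--Verbitsky and Charles--Pacienza, which the paper points to. Your setup is the standard one --- restrict the universal family to the Hodge locus $B=\Def(X,[R])$, a smooth codimension-one subspace cut out by the single period equation, form the relative space $p\colon\overline{\M}_0(\X_B/B,[R])\to B$, and note that $2n-2=(2n-3)+1$ with the extra $1$ coming from the symplectic form.

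The gap is in the mechanism you use to conclude. You reduce dominance of $p$ at $[f]$ to the vanishing of the obstruction map $T_0B\to H^1(\PP^1,f^*T_X)$, i.e.\ to smoothness of $p$ at $[f]$. That is both stronger than what is needed and not what Ran proves: there is no reason for this map to vanish, and the relative moduli space need not be smooth over $B$ at $[f]$. What the Hodge-theoretic (semiregularity-type) results actually deliver is a \emph{dimension lower bound}: every irreducible component of the relative space through $[f]$ has dimension at least $\dim B+(2n-2)$, because the obstructions to deforming the pair (complex structure, map) land in a subspace of controlled codimension, with the symplectic form removing one further obstruction. One then concludes by upper semicontinuity of fibre dimension rather than by smoothness: such a component meets the central fibre in $\M$, which has dimension exactly $2n-2$ by hypothesis, so its image in $B$ has dimension at least $\dim B$ and $p$ is dominant. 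This is precisely where the hypothesis $\dim\M=2n-2$ enters --- as a statement about the central fibre played off against a lower bound on the total space, not as an input to an obstruction-vanishing computation. You yourself flag the obstruction vanishing as ``the genuine obstacle''; the point is that the actual argument circumvents it entirely, and as written that step would fail. (A smaller inaccuracy: $f^*T_X\cong(f^*T_X)^\vee$ holds simply by pulling back $\omega\colon T_X\xrightarrow{\sim}\Omega_X$, independently of $f^*\omega=0$; what the vanishing of $f^*\omega$ on $\PP^1$ actually buys is the vanishing of the composite $T_{\PP^1}\to f^*T_X\cong f^*\Omega_X\to\Omega_{\PP^1}$, which is the source of the extra $+1$ in the dimension estimate.)
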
In other words, given a family $\pi:\X\to T$ of IHS varieties with a special fiber $X=\X_0$; a rational curve $f:\PP^1\to X$ with image $R\subset X$; and a global section of $R^{4n-2}\pi_*\ZZ$ of Hodge type $(2n-1,2n-1)$, specializing to $[R]$ on $X$. Then if there is a component of the moduli space $\overline{\M}_0(X,[R])$ containing $[f]$ of dimension exactly $2n-2$, then the map $f:\PP^1\to \X$ deforms (after taking some finite cover of $T$), and in particular $R$ deforms in the fibers of $\pi$. (See also \cite[Section 3]{CP} and \cite{AV} for similar statements).



\section{Special rational curves}

To prove Theorems \ref{IHC} and \ref{integralmori}, we will need to construct certain special rational curves on $S^{[n]}$ and $K_n(S)$ that satisfy the conditions of Proposition \ref{ranprop}. To explain the basic idea, consider the case where $n=2$, and $S$ is a K3 surface of degree 2. Letting $H$ be the polarization, each smooth $C\in |H|$ is a genus 2 curve admitting a unique $\g^1_2$. This  defines a rational curve $R_C$ on $C^{[2]}$, and hence on $S^{[2]}$. We can write the class $[R_C]\in H_2(S^{[2]},\ZZ)$ in terms of the decomposition \eqref{decomphom}; intersecting $R_C$ with $H$ shows that it has the form $H-t B$ for some $t\in \ZZ$. In particular, the class is primitive. From the double cover $S\to \PP^2$, we also obtain a plane $\PP^2\subset S^{[2]}$, which contains all the curves $R_C$, and the plane can be contracted by a birational map. In particular $R_C$ deforms in a family of dimension $2$ and thus satisfies Proposition \ref{ranprop}.

The example above is special in the sense that the curves $R_C$ are smooth. To construct other classes on $S^{[n]}$, we need to consider singular curves on $S$ and linear series on their normalizations. For this, we apply results on Brill--Noether theory on nodal curves on surfaces due to Ciliberto and Knutsen \cite{CK} for $K3$ surfaces, and by Knutsen, Lelli-Chiesa and the first author \cite{KLM} for abelian surfaces. 

Let us start with Hilbert schemes of K3 surfaces. Let $(S,H)$ be a primitively polarized K3 surface of degree $H^2=2p-2$ and let $C\in |H|$ be a curve with $\delta$ nodes as its only singular points. Given a linear series $\g^1_n$ on the normalization $\widetilde C$ of $C$, we obtain a natural rational curve $R_C$ in $S^{[n]}$ via the incidence correspondence
\begin{equation}\label{incidence}
I=\{(P,[Z])\in S\times S^{[n]} | P\in \Supp(Z)\} \to S^{[n]}.
\end{equation}
We say that the nodes are `non-neutral' with respect to the linear series if the linear series is base point free, and the corresponding morphism $\widetilde C\to \PP^1$ has simple ramification and does not ramify over the nodes of $C$. If this genericity condition is satisfied, we have that the homology class of $R_C$ is given in terms of the decomposition \eqref{decomphom} as

\begin{equation}\label{curveindecomp}
H-(p-\delta+n-1)\tau,
\end{equation}
 where $H\in \Pic(S)$ (see \cite{KLM}). Note that this class is primitive, since $H$ is. Moreover, the following divisor is proportial to $R_C$ via \eqref{duality}: $$(2n-2)H-(p-\delta+n-1)B.$$
 


The main properties of these curves are summarized in the following theorem. In the theorem, $V^n_{|H|,\delta}\subseteq |H|$ denotes the Severi variety of curves $C$ with $\delta$ nodes, whose normalizations admit a $\g_n^1$.

\begin{theorem}\cite[Thm 0.1]{CK} \label{thm:exist_K3}
 Let $(S,H)$ be a very general primitively polarized $K3$ of genus $p:=p_a(H)\ge 2$. Let $\delta$ and $n$ be integers satisfying $0 \leq \delta \leq p$ and $n \geq 2$.  Then the following statements hold:
  \begin{itemize}
\item [(i)] $V^n_{|H|,\delta}$ is non-empty if and only if
\begin{equation} \label{eq:boundA}
\delta \geq \alpha\Big(p-\delta-(n-1)(\alpha+1)\Big), \text{ where } \alpha= \Big\lfloor \frac{p-\delta}{2n-2}\Big\rfloor.
\end{equation}
\item [(ii)]  Whenever non-empty, $V^n_{|H|,\delta}$ is equidimensional of the expected dimension $\min\{2n-2,p-\delta\}$, and a general point on each component corresponds to an irreducible curve with normalization $\widetilde{C}$ of genus $g=p-\delta$, such that the set of $\g^1_n$'s on $\widetilde C$ is of dimension $\max\{0,2n-2-g\}$.
\item [(iii)] There is a component $V\subseteq V^n_{|H|,\delta}$, so that for $C$ and $\widetilde C$ as in $(ii)$, a general $\g^1_{n}$ on $\widetilde{C}$ is base point free and all nodes of $C$ are non-neutral with respect to it.
\end{itemize}
\end{theorem}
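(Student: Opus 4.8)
The plan is to study the incidence variety $\mathcal{W}$ of pairs $(C,A)$, where $C\in V_{|H|,\delta}$ is a $\delta$-nodal curve in $|H|$ and $A$ is a $\g^1_n$ on its normalization $\widetilde C$, together with its projection $p_1:\mathcal{W}\to V_{|H|,\delta}$ to the Severi variety and its projection to a parameter space of pencils. It is known that on a very general $(S,H)$ the Severi variety $V_{|H|,\delta}$ is nonempty and equidimensional of dimension $p-\delta$ precisely for $0\le\delta\le p$, and that the normalization of a general member has genus $g=p-\delta$; so the whole theorem reduces to controlling the fibres of $p_1$, i.e. the dimension of the Brill--Noether locus $G^1_n(\widetilde C)$ of pencils, whose expected dimension is the Brill--Noether number $\rho=2n-2-g$. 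The central tool will be the Lazarsfeld--Mukai bundle associated to a base-point-free pencil, which converts statements about $\g^1_n$'s into statements about rank-two bundles $E$ on $S$ with $c_1(E)=H$ and $c_2(E)$ determined by $n$ and $\delta$, a setting in which the hypothesis $\Pic(S)=\ZZ H$ can be leveraged through stability.

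First I would prove the upper bounds, which yield the ``only if'' direction of (i) and the inequality ``$\le$'' in (ii). Starting from a pencil $A$ on $\widetilde C$, I would carry out the Lazarsfeld--Mukai construction (working on the normalization and pushing forward to $S$) to obtain the bundle $E$, whose space of global sections computes $\dim G^1_n(\widetilde C)$. Because $\Pic(S)=\ZZ H$, any saturated subsheaf of $E$ is of the form $\mathcal{O}_S(kH)\otimes(\text{ideal})$, so the Bogomolov inequality together with an analysis of the Harder--Narasimhan/destabilizing filtration bounds $h^0(E)$ from above; optimizing this bound over the admissible slopes of a destabilizing subsheaf is exactly what produces the integer $\alpha=\floor{(p-\delta)/(2n-2)}$ and the inequality $\delta\ge\alpha\bigl(p-\delta-(n-1)(\alpha+1)\bigr)$. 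When this inequality fails the bound forces $h^0(E)=0$ and hence $V^n_{|H|,\delta}=\emptyset$; when it holds the same computation gives $\dim G^1_n(\widetilde C)\le\max\{0,2n-2-g\}$ for the general $C$.

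The hard part will be the existence direction of (i): assuming the inequality holds, I must exhibit an honest $\delta$-nodal curve in $|H|$ whose normalization carries a $\g^1_n$. The natural route is to run the Lazarsfeld--Mukai construction in reverse, building a rank-two bundle $E$ on $S$ with the prescribed Chern classes and with $h^0(E)$ as large as the numerical bound allows, and then extracting from a general section a nodal curve of the correct genus together with the desired pencil; the delicate points are the existence and global generation of such a bundle and the verification that the resulting curve has exactly $\delta$ nodes, and this is precisely where the inequality in (i) re-enters as the feasibility condition for the numerics. I expect this to be the main obstacle. A robust alternative, which I would also use to pin down equidimensionality and to propagate to the very general $S$, is to specialize $S$ to a K3 carrying an auxiliary elliptic pencil or $(-2)$-curve, construct $C$ and its $\g^1_n$ explicitly in that degeneration, and then deform back, using semicontinuity to transport nonemptiness and the expected dimension to the general member.

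For the dimension count in (ii) I would combine the above upper bound with the standard lower bound: imposing a $\g^1_n$ on a member of $V_{|H|,\delta}$ is at most $\max\{0,-\rho\}$ conditions, so every component of $V^n_{|H|,\delta}$ has dimension at least $(p-\delta)-\max\{0,(p-\delta)-(2n-2)\}=\min\{2n-2,p-\delta\}$, and the upper bound shows none can exceed it, giving equidimensionality; the fibre dimension $\max\{0,2n-2-g\}$ then follows by comparing $\dim\mathcal{W}$ with $\dim V^n_{|H|,\delta}$. Finally, part (iii) is a genericity statement: once a general pair $(C,A)$ is available, the loci of pencils that acquire a base point, or whose branched cover $\widetilde C\to\PP^1$ has a higher-order ramification point or ramifies over a preimage of a node, each form a proper closed subset of $G^1_n(\widetilde C)$, so a general $\g^1_n$ avoids all of them and the nodes are non-neutral.
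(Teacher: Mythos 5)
First, a point of reference: the paper does not prove this statement at all --- it is quoted verbatim from Ciliberto--Knutsen \cite[Thm 0.1]{CK} and used as a black box, so there is no internal proof to compare against; the relevant benchmark is the argument in \cite{CK}. At the level of strategy your outline does track that argument: Lazarsfeld--Mukai sheaves plus stability/Bogomolov on a Picard-rank-one $S$ for the non-existence and dimension upper bounds, a degeneration of $S$ to a K3 carrying an auxiliary elliptic pencil for existence, and a fibre-dimension comparison for equidimensionality. So the plan is pointed in the right direction, but as a proof it has genuine gaps.

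The largest gap is the one you flag yourself: the existence direction of (i) is the heart of the theorem and is not carried out --- neither the reverse Lazarsfeld--Mukai construction nor the elliptic-pencil degeneration is executed, and the specific inequality $\delta\ge\alpha(p-\delta-(n-1)(\alpha+1))$ with $\alpha=\lfloor (p-\delta)/(2n-2)\rfloor$ is never actually derived from either the destabilizing-subsheaf optimization or from the numerics of the degeneration; it is only asserted that these ``exactly produce'' it. A second, subtler gap is part (iii). You argue that the pencils with a base point, with non-simple ramification, or ramifying over a node form proper closed subsets of $G^1_n(\widetilde C)$, so a general pencil avoids them. But when $2n-2-g\le 0$ the set of $\g^1_n$'s on $\widetilde C$ is finite by part (ii), so ``proper closed subset'' would have to mean empty, and there is no a priori reason an isolated pencil on the normalization of a particular nodal curve is base point free with all nodes non-neutral. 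This is precisely why the theorem claims the property only for \emph{some} component $V$: it must be read off from the explicit curves produced in the existence proof, not from genericity. Finally, in the necessity direction you need to dispose of pencils with base points or neutral nodes before the Lazarsfeld--Mukai sheaf has the Chern classes your stability computation assumes; passing to the base-point-free part changes the degree and hence the numerology, and this reduction has to be accounted for explicitly.
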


So if $p-\delta\ge 2n-2$, $V^n_{|H|,\delta}$ has dimension $2n-2$ and the set of $\g^1_n$ on a general curve is 0-dimensional. When $p-\delta\le 2n-2$, $V^n_{|H|,\delta}$ has dimension $p-\delta$ and the set of rational curves $R_C$ on $S^{[2]}$ has dimension $(p-\delta)+(2n-2-g)=2n-2$. Hence in either case we have, if the above inequality is satisfied, a family of rational curves on $S^{[n]}$ of dimension $2n-2$. 


There is a very similar statement in the generalized Kummer case. We consider a polarized abelian surface $(S,H)$ of degree $H^2=2p-2$. Let $C$ be an element of $\{H\}$, the continuous system of curves with cohomology class $H$ (that is, $|H|$ up to translations on $S$). As before, given a linear series $\g^1_{n+1}$ on the normalization of $C$, we obtain a natural rational curve $R_C$ in $K_n(S)$. If the nodes are non-neutral with respect to this series, the class of $R_C$ is given by

$$H-(p-\delta+n)\eta.$$This is again primitive, if $H$ is.


\begin{theorem}\cite[Theorem 1.6]{KLM} \label{thm:exist_GK}
 Let $(S,H)$ be a very general abelian surface of genus $p:=p_a(H)\ge 2$. Let $\delta$ and $n$ be integers satisfying $0 \leq \delta \leq p-2$ and $n \geq 2$.  Then the following hold:
  \begin{itemize}
\item [(i)] There exists a nodal curve in $\{H\}$ whose normalization has a linear series of type $\g^1_{n+1}$ if and only if 
\begin{equation} \label{eq:boundA_abel}
\delta \geq \alpha\Big(p-\delta-1-(n+1)(\alpha+1)\Big), \text{ where }
\alpha= \Big\lfloor \frac{p-\delta-1}{2n+2}\Big\rfloor; 
\end{equation}
\item [(ii)]  When non-empty, the set of $\g^1_{n+1}$'s on curves in $\{H\}$ with $\delta$ nodes is equidimensional of dimension  $\min\{p-\delta,2n\}$ and a  general element in each component is an irreducible curve $C$ with normalization $\widetilde{C}$ of genus $g:=p-\delta$ such that it has a $\max\{0,\rho(g,1,n+1)\}=2n-g$ dimensional set of $\g^1_{n+1}$;
\item [(iii)]  There is at least one component of the above locus where, for $C$ and $\widetilde{C}$ as in (ii), a general $\g^1_{n+1}$ on $\widetilde{C}$ is base point free and all nodes of $C$ are non-neutral with respect to it.
\end{itemize}
\end{theorem}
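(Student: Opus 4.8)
The plan is to adapt the strategy of Ciliberto and Knutsen for the $K3$ case (Theorem~\ref{thm:exist_K3}, \cite{CK}) to abelian surfaces, keeping the same three-step structure but accounting for the two features that distinguish the two geometries: the relevant parameter space is now the continuous system $\{H\}$ rather than a single linear system (reflecting $h^1(\O_S)=2$ and the $2$-dimensional family of translates), and the Lazarsfeld--Mukai construction has to be modified because $h^1(\O_S)\neq 0$. These are precisely the features that produce the replacements $n\mapsto n+1$ and $2n-2\mapsto 2n+2$ together with the extra $-1$ in \eqref{eq:boundA_abel}. I would treat the three assertions in turn, taking for granted the known non-emptiness and dimension results for Severi varieties of nodal curves on abelian surfaces.

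For the necessity direction of (i), I would start from a base point free $\g^1_{n+1}$ on the normalization $\widetilde C$ of a $\delta$-nodal $C\in\{H\}$, given by a line bundle $A$ with $\deg A=n+1$ and $h^0(A)=2$, and form the associated Lazarsfeld--Mukai--type sheaf $E$ on $S$ (corrected so that it is locally free with $c_1(E)=H$ and $c_2(E)$ determined by $n+1$ and $\delta$). Existence of the pencil forces $E$ to carry enough sections, which is impossible once $g=p-\delta$ is large unless $E$ is $\mu$-unstable. Since $(S,H)$ is very general, $\operatorname{NS}(S)=\ZZ H$, so any destabilizing subsheaf has $c_1=aH$ for some integer $a$; writing the Hodge-index/Bogomolov slope inequalities for such a sub-line-bundle and its quotient, and then optimizing over $a$, yields exactly \eqref{eq:boundA_abel}, with $\alpha$ emerging as the optimal integer value of the parameter. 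The delicate point is to control $c_2$ of the destabilizing pieces in terms of $\delta$ so that the resulting bound is sharp. (When $g\le 2n$ one has $\alpha=0$ and the inequality is vacuous, consistent with a general curve of such genus already carrying a $\g^1_{n+1}$; the content of the bound lies in the range $g>2n$.)

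The sufficiency direction is where I expect the main obstacle to lie, and it is genuinely harder than in the $K3$ case. An abelian surface carries no fibration over $\PP^1$, so one cannot simply restrict a genuine elliptic pencil $|F|$ to $C\in\{H\}$ as one does on a $K3$; the $\g^1_{n+1}$ on $\widetilde C$ is not visible on $S$ and must instead be produced as an intrinsic feature of the (partial) normalization, with the $\delta$ nodes doing the work of lowering the gonality. I would therefore proceed by degeneration and smoothing: degenerate the abstract curve to a reducible nodal curve $C_0$ of controlled combinatorial type carrying a limit $\g^1_{n+1}$, realize $C_0$ with the prescribed node incidence inside a degeneration of $(S,H)$, and then smooth the surface and the curve simultaneously while keeping the pencil, arriving at the very general $(S,H)$ with $\operatorname{NS}(S)=\ZZ H$. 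The hard part is to match the number of imposable nodes to the sharp threshold \eqref{eq:boundA_abel}, governed by the combinatorial quantity $\alpha$, and to prove that the pencil genuinely smooths; that a very general abelian surface contains no rational or elliptic curves is useful here, since it excludes parasitic limits.

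Finally, for (ii) I would combine the two ingredients: the Severi variety of $\delta$-nodal curves in $\{H\}$ has dimension $p-\delta$, and imposing a $\g^1_{n+1}$ on the genus-$g$ normalization cuts it down by the Brill--Noether codimension $\max\{0,-\rho(g,1,n+1)\}=\max\{0,g-2n\}$, giving dimension $\min\{p-\delta,2n\}$ and a $\max\{0,\rho\}=2n-g$-dimensional family of pencils on a general member; equidimensionality follows because the bundle analysis of the previous paragraph bounds every component from above by the expected dimension, while the construction realizes it from below. For (iii), on the component produced by the construction I would argue by general position in the universal family of pencils: a base point, a ramification point over a node, or a non-simple ramification each define a proper closed condition, so the generic $\g^1_{n+1}$ avoids all of them, yielding base point freeness together with non-neutrality of the nodes.
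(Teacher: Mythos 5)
This statement is not proved in the paper at all: it is quoted verbatim from \cite[Theorem 1.6]{KLM} and used as a black box, exactly as Theorem \ref{thm:exist_K3} is imported from \cite{CK}. So there is no internal proof to compare against, and the only fair question is whether your sketch would actually establish the result. It would not, as it stands: it is an accurate description of the overall strategy of \cite{KLM} (a Lazarsfeld--Mukai--type destabilization argument for the ``only if'' direction, a degeneration-and-smoothing construction for existence), but every step in which the actual content of the theorem resides is explicitly deferred. Concretely: (a) in the necessity direction you assert that ``optimizing over $a$ yields exactly \eqref{eq:boundA_abel}'' while acknowledging that you cannot control $c_2$ of the destabilizing pieces in terms of $\delta$ --- but that control is precisely what makes the bound sharp and produces the floor function defining $\alpha$; (b) in the sufficiency direction you never exhibit the degeneration. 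In \cite{KLM} this is done by specializing to concrete abelian surfaces (products of elliptic curves) where the $\delta$-nodal curve and its limit $\g^1_{n+1}$ are written down explicitly as unions of translates of the factors; without such an explicit model there is no way to verify that the number of imposable nodes meets the threshold \eqref{eq:boundA_abel}, which is the whole point of part (i).

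There is also a genuine logical problem in your argument for (iii). Saying that a base point, ramification over a node, or non-simple ramification are ``proper closed conditions'' presupposes that some pencil in the family avoids them, which is exactly what is to be proved; and when $g=p-\delta\geq 2n$ the set of $\g^1_{n+1}$'s on a fixed general normalization is finite, so there is no room for a general-position argument within the pencils on a single curve --- one must move the curve in the Severi variety, and non-neutrality must again be checked on the explicit degenerate model and then shown to persist under smoothing. In short, the proposal is a plausible reading of how \cite{KLM} proceeds, but it is a plan rather than a proof, and for the purposes of this paper the correct move is simply to cite \cite[Theorem 1.6]{KLM}, as the authors do.
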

Let us illustrate how to use this theorem to construct rational curves in a few interesting cases on Hilbert schemes of $n$ points on K3 surfaces. 

\begin{example}First, let us work with curves $R$ such that their dual divisor $D$ has square $8d-(2n-2)$ and divisibility $2$ (that is, the pairing between $D$ and the rest of $H^2(X,\ZZ)$ takes all even integer values)\footnote{An easy computation shows that the square of any divisor of divisibility $2$ is congruent to $(2-2n)$ modulo 8, so we are actually considering all possible squares for $D$}. It is possible to deform to the case where $X=S^{[n]}$ and $D=2H-B$ (see Lemma \ref{allthemuweneed} below), where $H$ is the divisor induced by the polarization on $S$ and $H^2=2d$. The dual curve has class $H-(n-1)\tau$ is represented by a rational curve  corresponding to a linear series of type $\g^1_n$ on the normalization of a rational curve in $|H|$. It is well-known that such rational curves always exists if $d>0$ (and thus if the square of $2H-B$ is positive) and so we obtain the desired linear series exists on its normalization.
\end{example}

\begin{example}
A similar situation arises with the curve class $H-(2n-1)\tau$ on $S^{[n]}$. This corresponds to linear series of type $\g^1_n$ on normalizations of curves on $S$ of geometric genus $n$ inside $|H|$. If $q(H-(2n-1)\tau)>0$, the square of $H$ is strictly bigger than $2n-2$, so there does indeed exist curves of genus $n$ inside $|H|$ (here the integer $\alpha$ above is zero, and the conditions of Theorem \ref{thm:exist_K3} are satisfied). Since any smooth curve of genus $n$ contains a $\g_n^1$, we see obtain the desired rational curves on $S^{[n]}$.
\end{example}

\begin{example}
Let $F$ be the Fano variety of lines of a cubic fourfold $Y\subset \PP^5$. It is well-known that $F$ is deformation equivalent to a Hilbert scheme $S^{[2]}$ of a degree 14 K3 surface $(S,H)$. In terms of $H$ and $B$, the Pl\"ucker polarization $\O_{F}(1)$ from the ambient Grassmannian $Gr(2,6)$ is given by $\O(1)=2H-5B$ (see \cite{BD}). The dual curve class is given by $H-5\tau$. We can produce rational curves on $S^{[2]}$ with this class using Theorem \ref{thm:exist_K3} by taking $p=8, \delta=4$. Then $\alpha=2$ and the inequality \eqref{eq:boundA} is satisfied. In this example, we obtain a 2-dimensional family of rational curves, since $V^n_{|H|,\delta}$ has dimension $2$, and each normalization $\widetilde C$ has at most one $\g^1_2$.

These rational curves have degree $(2H-5B)\cdot (H-5\tau)=2\cdot 14-25=3$. Geometrically the curves arise as follows: For each line $\ell$ of `type II' (i.e., with normal bundle $N_\ell=\O(1)^2+\O(-1)$), there is a tangent $\PP^3$ which intersects $Y$ in a cubic surface singular along $\ell$; the residual lines are parameterized by a degree 3 curve in $F$.
\end{example}


More generally, we have the following result for K3 surfaces, with a straightforward generalization to the generalized Kummer case (with $2n+3\leq \mu\leq 3n+3$):


\begin{lemma}\label{allthecurvesweneed}
Let $S$ be a K3 surface of degree $2d$ and let $2n-1\leq \mu\leq 3n-3$ be an integer. Let $R$ be the curve class $H-\mu\tau$ in $S^{[n]}$. Suppose $q(R)\geq 0$. Then there exists a rational curve of class $R$ and a component $\M\subseteq\overline{\mathcal M}_0(X,[R])$ of dimension $2n-2$.
\end{lemma}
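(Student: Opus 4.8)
The plan is to realize $R$ through the incidence construction and then match lower and upper bounds on the dimension of the moduli space. First I would translate the numerics. Write $H^2=2d=2p-2$, so $p=p_a(H)=d+1$. By \eqref{curveindecomp} (following \cite{KLM}), a $\delta$-nodal $C\in|H|$ together with a base-point-free, non-neutral $\g^1_n$ on its normalization $\widetilde C$ gives a rational curve $R_C$ of class $H-(p-\delta+n-1)\tau$. Matching the $\tau$-coefficient with $\mu$ forces the geometric genus $g=p-\delta$ to equal $\mu-n+1$, and hence $\delta=d+n-\mu$. The hypothesis $2n-1\le\mu\le 3n-3$ is precisely $n\le g\le 2n-2$, i.e. the Brill--Noether range $\rho(g,1,n)=2n-2-g\ge 0$ in which a general genus-$g$ curve carries a $\g^1_n$. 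Finally, since the decomposition \eqref{decomphom} is orthogonal with $q(H)=2d$ and $q(\tau)=-1/(2(n-1))$, the hypothesis $q(R)\ge 0$ reads $4d(n-1)\ge\mu^2$; as the quadratic $\mu^2-4(n-1)(\mu-n)$ has negative discriminant, this yields $d>\mu-n$, so $\delta\ge 0$ (and $\delta\le p$ holds trivially from $\mu\ge n-1$).

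With these ranges I would apply Theorem \ref{thm:exist_K3} to a very general $(S,H)$ of degree $2d$ — the case relevant for the later deformation argument. The non-emptiness bound \eqref{eq:boundA} holds because $\alpha=\lfloor g/(2n-2)\rfloor\in\{0,1\}$ and its right-hand side is $\le 0$ in both cases, reducing to $\delta\ge 0$. Part (ii) then gives $V^n_{|H|,\delta}$ equidimensional of dimension $\min\{2n-2,g\}=g$, with general member an irreducible $\delta$-nodal curve of geometric genus $g$ carrying a $(2n-2-g)$-dimensional family of $\g^1_n$'s, and part (iii) provides a component $V$ on which a general such series is base-point-free with all nodes non-neutral. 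Feeding a pair $(C,\g^1_n)\in V$ into \eqref{incidence} produces a rational curve $R_C$ of class $H-(g+n-1)\tau=H-\mu\tau=R$, establishing existence.

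For the dimension of $\overline{\M}_0(X,[R])$ I would first count parameters: the base $V$ contributes $g$ and the $\g^1_n$ contributes $2n-2-g$, so the family $\mathcal F$ of curves $R_C$ has dimension $g+(2n-2-g)=2n-2$. Base-point-freeness makes the induced map $\PP^1\to S^{[n]}$ birational onto its image, so a general $R_C$ is a finite map and the component $\M\subseteq\overline{\M}_0(X,[R])$ through $[R_C]$ has $\dim\M\ge 2n-2$ by the lower bound recalled before Proposition \ref{ranprop}. What remains is the matching upper bound.

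The inequality $\dim\M\le 2n-2$ is the crux, and I would establish it by a recovery argument showing $\mathcal F$ is dense in $\M$. Given a general stable map of class $R$ in $\M$, pulling back the universal length-$n$ subscheme along the source $\PP^1$ gives a flat family of length-$n$ subschemes whose supports sweep out a curve $C'\subset S$; since the $H_2(S)$-part of $R$ is $H$, one gets $[C']=H$, so $C'\in|H|$, while the pencil structure recovers a $\g^1_n$ on $\widetilde{C'}$. Because the $\tau$-coefficient of $R$ is the fixed integer $\mu$, \eqref{curveindecomp} forces $C'$ to have exactly $\delta=d+n-\mu$ nodes and the series to be non-neutral, so $(C',\g^1_n)$ lies in the closure of $V$ and the map is an $R_{C'}$. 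Thus the construction map from the $(2n-2)$-dimensional space of pairs is generically injective with dense image in $\M$, giving $\dim\M=2n-2$, as required by Proposition \ref{ranprop}. The delicate point — and the main obstacle — is to make this recovery rigorous for the general deformation of $R_C$, controlling contracted or multiply-covered components and confirming that the reconstructed data genuinely lands in the component $V$ rather than in a different stratum of the Severi variety.
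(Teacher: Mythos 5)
Your proposal is correct and follows essentially the same route as the paper: translate $\mu$ into the geometric genus $g=\mu-n+1$ and $\delta=d+n-\mu$, verify the hypotheses of Theorem \ref{thm:exist_K3} (with $\alpha\in\{0,1\}$ and $\delta\ge 0$ forced by $q(R)\ge 0$), and count $g+(2n-2-g)=2n-2$ parameters for the family of curves $R_C$. The one step you flag as the delicate crux --- that every deformation of $R_C$ in $\overline{\M}_0(X,[R])$ is again induced by a linear series on a curve on $S$, so that the component has dimension exactly $2n-2$ --- is precisely what the paper outsources to \cite[Proposition 5.3]{KLM} and \cite[Proposition 3.6]{KLM2} rather than reproving.
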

\begin{proof}
We want to construct $R$ as in the previous examples. That is, we are trying to construct linear series of type $\g^1_n$ on curves in $|H|$ of geometric genus $\mu-n+1$. By the bound on $\mu$, we have $\alpha=0$ unless $n=2$ or $\mu=3n-3$. Therefore, if we are not in these cases, the bound of Theorem \ref{thm:exist_K3} is satisfied and the linear series exists if and only if there are curves in $|H|$ of genus $\mu-(n-1)$. Since $H$ has genus $d+1$, this happens if and only if $d+1\geq \mu-(n-1)$. However, by assumption, $q(R)=2d-\frac{\mu^2}{2n-2}$ is positive, and this implies $d+1\geq \mu-(n-1)$. The remaining cases when $n=2$ or $\mu=3n-3$ have $\alpha=1$, but the condition of Theorem \ref{thm:exist_K3} is trivially satisfied, so the above applies also to this case.
By construction, the curve $R$ deforms in a family of dimension $2n-2$. Moreover, the incidence correspondence \eqref{incidence} can be used to prove that all deformations of these rational curves on the Hilbert scheme of points on a general K3 (or a generalized Kummer) are actually induced by linear series on different curves on $S$ (see \cite[Proposition 5.3]{KLM} or \cite[Proposition 3.6]{KLM2} for a proof of this). \end{proof}

%
%

In particular, this implies that all curves constructed using the above theorems satisfy the hypothesis of Proposition \ref{ranprop}. This fact will be important in the proof of Theorem \ref{IHC} and Lemma \ref{sumofrat} below. Moreover, we will construct similar curves to prove Lemma \ref{ratzero} and then finally Theorem \ref{integralmori}. 

\begin{remark}
These proofs of the two theorems above rely on the assumption that the Picard number of $S$ is 1. If $S$ has higher Picard number, these curves can move in larger families, as was exploited in \cite{FKP} to construct rational surfaces.
\end{remark}
Finally, let us conclude with a lattice theoretic result which highlights the importance of Lemma \ref{allthecurvesweneed}. Notice that an analogous result was proven in \cite[Theorem 2.4]{CP}, with different values of $\mu$. We include a proof for completeness.
Let $L$ be an even lattice and let $A_L:=L^\vee/L$ be its discriminant group, which is a finite group. For the properties of the discriminant group and its links to lattice theory, we refer to \cite{Nik}. Given a primitive element $l\in L$, we define the divisibility $div(l)$ as the positive generator of the ideal $(l,L)$ in $\ZZ$. The element $l/div(l)$ is then a well defined element of $L^\vee$ and we denote by $[l/div(l)]$ its class in $A_L$. We will use the following, known as Eichler's criterion:
\begin{lemma}\label{eichler}\cite[Lemma 3.5]{ghs}
Let $L'$ be an even lattice and let $L=U^2\oplus L'$. Let $v,w\in L$ be two primitive elements such that the following holds:
\begin{itemize}\renewcommand{\labelitemi}{$\bullet$}
\item $v^2=w^2$.
\item $[v/div(v)]=[w/div(w)]$ in $A_L$.
\end{itemize}
Then there exists an isometry $g\in \widetilde{O}^+(L)$ with determinant one and such that $g(v)=w$.
\end{lemma}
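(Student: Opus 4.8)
The plan is to prove the criterion via \emph{Eichler transvections}, the standard device for manufacturing isometries inside the stable orthogonal group. For an isotropic vector $e\in L$ (so $e^2=0$) and a vector $a\in e^\perp$, set
\[
t(e,a)(x)=x+(e,x)\,a-(a,x)\,e-\tfrac12(a,a)(e,x)\,e .
\]
First I would verify, by direct computation, that each $t(e,a)$ is an isometry fixing $e$ and that it is unipotent; hence it has determinant $1$ and lies in the identity component of $O(L\otimes\RR)$, so $t(e,a)\in O^+(L)$. Crucially it acts trivially on $A_L$: for $x\in L^\vee$ we have $(e,x),(a,x)\in\ZZ$ and $\tfrac12(a,a)\in\ZZ$ since $L$ is even, so $t(e,a)(x)-x\in L$. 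Thus $t(e,a)\in\widetilde O^+(L)$, and any product of such maps lands automatically in the target group with determinant $1$.

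The heart of the matter is to show that the subgroup $E(L)\subseteq\widetilde O^+(L)$ generated by the transvections attached to the two hyperbolic summands acts transitively on primitive vectors of a fixed square and a fixed discriminant class. I would prove this by reducing an arbitrary primitive $v$ to an explicit normal form depending only on these invariants, writing $L=U_1\oplus U_2\oplus L'$ with standard bases $e_i,f_i$. When $\operatorname{div}(v)=1$ one uses the hyperbolic planes to locate an isotropic vector $e$ with $(v,e)=1$; a single transvection then carries $v$ to $e_1+\tfrac12 v^2 f_1$, and the computation $t(f_1,-v')(v)=e_1+\tfrac12 v^2 f_1$ (where $v'$ is the component of $v$ orthogonal to the first plane) pins down the normal form. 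For $\operatorname{div}(v)=d>1$ there is no such pivot, and the reduction instead proceeds in stages: using the transvections attached to $e_2,f_2$, together with a Euclidean-type bookkeeping exploiting primitivity, one clears the $U_2$-component so that $v\in U_1\oplus L'$; one then replaces the $L'$-part by a fixed representative $c$ of the class $[v/\operatorname{div}(v)]$ and adjusts the $f_1$-coefficient so as to realise the prescribed square, via the relation $v^2=2\,(e_1\text{-coefficient})\cdot\operatorname{div}(v)+c^2$.

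Granting this Main Lemma, the statement is immediate: since $v$ and $w$ share the same square and the same discriminant class, each is $E(L)$-equivalent to the common normal form $v_0$, so there are $g_1,g_2\in E(L)\subseteq\widetilde O^+(L)$ with $g_1(v)=v_0=g_2(w)$; then $g:=g_2^{-1}g_1\in\widetilde O^+(L)$ has determinant $1$ and satisfies $g(v)=w$.

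I expect the main obstacle to be the reduction to normal form in the case $\operatorname{div}(v)>1$, equivalently for a nontrivial discriminant class: here one cannot simply rescale to a pivot, and it is precisely the \emph{second} hyperbolic plane that furnishes the extra room to perform the normalisation using transvections alone — and hence to keep the resulting isometry inside $\widetilde O^+(L)$ rather than merely in $O(L)$. Tracking the divisibility through the Euclidean reduction, and checking that the normalising transvections leave the discriminant class undisturbed, is the delicate bookkeeping; once the explicit normal form is secured, transitivity and the conclusion are formal.
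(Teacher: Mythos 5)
The paper offers no proof of this lemma at all — it is quoted verbatim from [GHS, Lemma 3.5], and the proof in that reference (going back to Eichler) is exactly the transvection argument you outline, so your approach coincides with the standard, cited one. Your sketch is correct as far as it goes: the verification that $t(e,a)$ is a unipotent isometry acting trivially on $A_L$ is routine, and you rightly isolate the one nontrivial step, namely the transitivity of the transvection subgroup on primitive vectors of fixed square and fixed discriminant class, for which the normal-form reduction $v\mapsto d\,e_1+\beta f_1+c$ (with $d=div(v)$ and $\beta$ determined by $v^2=2d\beta+c^2$) is the standard and correct strategy.
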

Here, $U$ denotes the rank 2 hyperbolic lattice, $O^+(L)$ denotes the group of isometries preserving the orientation on $L$ and $\widetilde{O}^+(L)$ denotes the subgroup of isometries whose induced action on $A_L$ is trivial. Note that, if $L$ is primitively embedded into another lattice $M$, an isometry of $\widetilde{O}(L)$ can be extended to an isometry of $M$ acting trivially on $L^\perp$.
\begin{lemma}\label{allthemuweneed}
Let $(X,D)$ be a pair consisting of a manifold $X$ of $K3^{[n]}$-type and a divisor $D$ of square $2d$ and divisibility $t$. Then there exists a polarized $K3$ surface $(S,H)$ of degree $2s$, an integer $2n-1\leq \mu\leq 3n-3$ such that $(X,D)$ is deformation equivalent to $(S^{[n]},tH-\mu/e B)$, where $e=\gcd(2n-2,\mu)$ and the dual curve to $tH-\mu/e B$ is $H-\mu\tau$.
\end{lemma}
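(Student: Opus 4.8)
The plan is to turn the statement into a lattice-theoretic normal form problem and resolve it with Eichler's criterion. Write $L=H^2(X,\ZZ)$, which for the $K3^{[n]}$-type is the Beauville--Bogomolov lattice $U^3\oplus E_8(-1)^2\oplus\langle-(2n-2)\rangle$. In particular $L$ contains $U^2$ as an orthogonal summand, so Lemma~\ref{eichler} applies to it. By the global Torelli theorem for irreducible holomorphic symplectic manifolds together with Markman's determination of the monodromy group $\mathrm{Mon}^2(X)$ of the $K3^{[n]}$-type --- which is the group $\widehat{O}^+(L)$ of orientation-preserving isometries acting on the discriminant group $A_L$ by $\pm\operatorname{id}$ --- two polarized pairs of this deformation type are deformation equivalent as pairs precisely when their polarizations lie in a common $\mathrm{Mon}^2$-orbit. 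Since $\widetilde{O}^+(L)\subseteq\widehat{O}^+(L)=\mathrm{Mon}^2(X)$, it suffices to produce a model $(S^{[n]},\,tH-\tfrac{\mu}{e}B)$ with $2n-1\le\mu\le3n-3$ whose polarization has the same square and the same discriminant class as $D$, up to the sign permitted by $\widehat{O}^+$.

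Set $m=2n-2$. Since $A_L\cong\ZZ/m$ is cyclic, generated by $g=[B/m]$ with $q(g)\equiv-1/m\pmod{2\ZZ}$ (see \cite{Nik}), the divisibility $t=\operatorname{div}(D)$ divides $m$, and $[D/t]$ has order $t$ in $A_L$. A direct computation in $H^2(S,\ZZ)\oplus\ZZ B$ shows that, taking $e=\tfrac{m}{t}$ and choosing $\mu$ with $\gcd(m,\mu)=e$, the class $D_0=tH-\tfrac{\mu}{e}B$ (with $H^2=2s$) is primitive, has divisibility exactly $t$, discriminant class $[D_0/t]\equiv-\mu\,g$, square $q(D_0)=t^2\bigl(2s-\tfrac{\mu^2}{m}\bigr)$, and dual curve class $H-\mu\tau$ under the duality map \eqref{duality}. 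Matching $(X,D)$ therefore reduces to two things: choosing $\mu\in[2n-1,3n-3]$ with $\gcd(m,\mu)=e$ realizing the prescribed discriminant class, and then solving $q(D_0)=2d$ for $s$.

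The key computation is the first of these, and it is where the range of $\mu$ is dictated. Writing $[D/t]=ek\,g$ with $\gcd(k,t)=1$, the condition $-\mu\equiv ek\pmod m$ (or $-\mu\equiv-ek$, using the sign freedom) pins down a nonzero residue $r\in\{1,\dots,m-1\}$, and one checks $\gcd(m,r)=e$ automatically. The integers in $[2n-1,3n-3]$ represent exactly the residues $1,\dots,n-1=\tfrac{m}{2}$ modulo $m$; since replacing $[D/t]$ by $-[D/t]$ --- allowed because $-\operatorname{id}$ on $A_L$ is realized inside $\mathrm{Mon}^2$ --- exchanges $r$ with $m-r$, one of the two lies in $\{1,\dots,m/2\}$ and supplies the desired $\mu\in[2n-1,3n-3]$. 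Verifying that this window meets every divisibility $t\ge2$ and every discriminant class is the main point, and the hardest part, of the argument. Finally $s=\tfrac{d}{t^2}+\tfrac{\mu^2}{2m}$ is automatically an integer by the compatibility $2d/t^2\equiv q([D/t])\pmod{2\ZZ}$ between the square and the discriminant form (which holds because $\operatorname{div}(D)=t$), and it is positive whenever $d\ge0$, the only case needed for the application. Eichler's criterion (Lemma~\ref{eichler}) applied to $D_0$ and $D$, now agreeing in square and discriminant class, yields the isometry in $\widetilde{O}^+(L)\subseteq\mathrm{Mon}^2(X)$ that completes the proof.
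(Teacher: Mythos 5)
Your proposal is correct and follows essentially the same route as the paper: reduce deformation equivalence of the pairs to a lattice-theoretic normal form, match squares and discriminant classes, apply Eichler's criterion (Lemma~\ref{eichler}), and use the $\pm\operatorname{id}$ action on $A_{\Lambda_n}$ (reflection in $B$) to cut the range of $\mu$ from a full set of residues mod $2n-2$ down to $[2n-1,3n-3]$. The only difference is one of packaging --- you invoke the global Torelli theorem and Markman's description of $\mathrm{Mon}^2$ directly, while the paper first deforms to a Hilbert scheme and then routes the same isometry through the Mukai-lattice embedding and Apostolov's criterion --- and note that both arguments silently treat only the nonzero discriminant classes (your explicit restriction to $t\ge 2$ mirrors a gap already present in the paper's own reduction at the residue $\mu\equiv 0$).
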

\begin{proof}
First of all, the pair $(X,D)$ can be deformed to a pair $(S^{[n]},D')$ for some $S$ and some $D'$, as the locus of Hilbert schemes is an hyperplane divisor in the marked moduli space of manifolds of $K3^{[n]}$ type, hence it intersects the image of the moduli space of pairs $(X,D)$ in the period domain, which is path connected, and by \cite[Proposition 7.1]{Mark_tor} this path lifts to a deformation of $(X,D)$ to $(S^{[n]},D')$. We can further deform $S$ so that it is very general and has Picard rank one, 
with generator $H$, therefore $D'=aH+bB$, where $t$ divides $a$ and $D'^2=D^2$.  
By \cite[Corollary 9.5]{Mark_tor}, there is an embedding of $\Lambda_n:=H^2(S^{[n]},\ZZ)$ in the Mukai lattice $\Lambda:=U^4\oplus E_8(-1)^2$,  which is well defined up to isometry and coincides with the usual embedding obtained by seeing $S^{[n]}$ as a moduli space of ideal sheaves on $S$. Notice that, if $n-1$ is a prime power, there is only one isometry class of these embeddings. Moreover, the orthogonal complement of $\Lambda_n$ under this embedding has rank one, and let $v$ be a generator of it. 


Let us denote by $T(D')$ and $T(tH-\mu/eB)$ the rank two primitive lattices containing $D'$ and $v$ or $tH-\mu/eB$ and $v$ respectively. By \cite[Proposition 1.6]{apostolov}, the two pairs are deformation equivalent if and only if there exists an isometry of $T(D')$ and $T(tH-\mu/eB)$ sending $D'$ to $tH-\mu/eB$. That is, the isometry must send $v$ into $\pm v$ and therefore descends to an isometry of $\Lambda_n$. Let us consider the discriminant group $A_{\Lambda_n}:=\Lambda_n^\vee/\Lambda_n$, which is a cyclic group of order $2n-2$ generated by $[B/(2n-2)]$. By elementary lattice theory, the isometry above has to act as $\pm1$ on the discriminant group $A_{\Lambda_n}$, see e.g. \cite[Corollary 9.5]{Mark_tor}. Notice that $D'/t$ and $(tH-\mu/eB)/t=(H-\mu/(2n-2)B)$ are well defined elements of $\Lambda_n^\vee$ by the definition of divisibility.   

Notice that $[(tH-\mu/eB)/t]$ is equal to $\mu[B/(2n-2)]$ inside $A_{\Lambda_n}$. From Eichler's criterion in Lemma \ref{eichler}, if $[D'/t]=[(tH-\mu/eB)/t]$ there exists an isometry of $\Lambda_n$ which acts trivially on $A_{\Lambda_n}$ (and therefore can be extended to $\Lambda$) sending $D'$ to $tH-\mu/eB$. Therefore our claim holds true with $2n-1\leq \mu\leq 4n-4$, so that $\mu$ takes all possible values modulo $2n-2$. To obtain the desired bound, it suffices to compose with the isometry given by reflection along $B$, which acts as $-1$ on $A_{\Lambda_n}$, so that only half of the values of $\mu$ are needed.
\end{proof}

%
%
%
%

\section{Proof of Theorem \ref{IHC}}

Let $X,X'$ be two IHS varieties and let $h,h'$ be primitive polarizations on $X$ and $X'$ respectively. We will for simplicity say that $(X,h)$ and $(X',h')$ are deformation equivalent if there is a smooth projective family $\pi: \X\to T$ over an irreducible curve $T$; a line bundle $\H$ on $X$; and two points $0,1\in T$ so that $(X,h)=(\X_0,c_1(\H)|_{\X_0})$ and $(X',h')=(\X_1,c_1(\H)|_{\X_1})$.


\begin{lemma}\label{ratcurve}
Let $(X,h)$ and $(X',h')$ be two deformation equivalent primitively polarized IHS varieties of dimension $2n$, connected by a family $\pi:\X\to T$, and let $R\subset X$ be a rational curve which deforms in a family of dimension $2n-2$ in $X$. Suppose further that $[R]$ is proportional to $h$ (via the embedding \eqref{duality}). Then $R$ deforms in the fibres of $\pi$. In particular, also $X'$ has an effective 1-cycle  (with class proportional to $h'$) with components being rational curves.
\end{lemma}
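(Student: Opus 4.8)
The plan is to deduce everything from the family version of Proposition \ref{ranprop} recorded in the preliminaries. That statement requires two inputs over the base $T$: a global section of $R^{4n-2}\pi_*\ZZ$ that is of Hodge type $(2n-1,2n-1)$ on every fibre and specialises to $[R]$ on $X=\X_0$; and a component of $\overline{\M}_0(X,[R])$ of dimension exactly $2n-2$ passing through the point $[f]$. The second input is given for free: it is precisely the hypothesis that $R$ deforms in a family of dimension $2n-2$ in $X$. So the substance of the argument is to manufacture the horizontal family of Hodge classes.

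For this, first I would set $h=c_1(\H)|_{X}$ and use the hypothesis that $[R]=\lambda\,\phi(h)$ for some rational $\lambda$ under the embedding \eqref{duality}. The key structural fact is that the Beauville--Bogomolov form $q$ is monodromy invariant for the family $\pi$, so the induced duality map $\phi\colon R^2\pi_*\ZZ\to R^{4n-2}\pi_*\ZZ$ commutes with monodromy and therefore sends flat sections to flat sections. Now $\H$ is a genuine line bundle on the total space $\X$, so $c_1(\H)$ is a monodromy invariant section of $R^2\pi_*\ZZ$ which restricts to a $(1,1)$-class on each fibre; consequently $\phi(c_1(\H))$ is a monodromy invariant section of $R^{4n-2}\pi_*\QQ$ that is of type $(2n-1,2n-1)$ fibrewise. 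Since $[R]$ is a rational multiple of $\phi(h)$, its flat transport is again monodromy invariant over $\QQ$ and fibrewise of type $(2n-1,2n-1)$; being both monodromy invariant and integral at the special point $0\in T$, this flat transport is in fact an integral global section of $R^{4n-2}\pi_*\ZZ$ restricting to $[R]$ on $X$. This supplies the first input, with monodromy invariant class $\beta:=[R]$. Conceptually this is just the remark from the preliminaries that the Hodge locus of $R$ coincides with that of the proportional divisor, and the latter obviously extends across $T$ since $\H$ does.

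With both inputs available, Proposition \ref{ranprop} applies: after a finite base change $T'\to T$, the stable map $f\colon\PP^1\to\X$ deforms over $T'$, so $R$ deforms in the fibres of $\pi$. Finally I would invoke properness of the relative Kontsevich space $\overline{\M}_0(\X/T,\beta)\to T$: the deformation produces a component dominating $T$, hence by properness the fibre over every point of $T$ is non-empty, including over $1\in T$. The resulting stable map to $\X_1=X'$ has as image an effective $1$-cycle whose components are rational curves and whose class is the flat transport of $[R]$, which is proportional to $h'=c_1(\H)|_{X'}$ via \eqref{duality}. The main obstacle I anticipate is the construction of the horizontal Hodge section in the middle paragraph: one has to argue carefully that proportionality to the globally defined polarisation $h$, together with monodromy invariance of $q$, forces the integral class $[R]$ itself to stay Hodge across the whole family; the subsequent appeals to Ran's theorem and to properness are then formal.
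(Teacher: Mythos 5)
Your proposal is correct and follows essentially the same route as the paper's proof: both verify the hypotheses of Proposition \ref{ranprop} by using the given $(2n-2)$-dimensional family for the dimension condition and the proportionality of $[R]$ to the globally defined class $c_1(\H)$ to see that $[R]$ stays Hodge in every fibre, then conclude by deforming the stable map over the irreducible base $T$. You simply spell out in more detail (monodromy invariance of $q$, integrality of the flat transport, properness of the relative Kontsevich space) what the paper compresses into two sentences.
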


\begin{proof}
Let $f:\PP^1\to X$ denote the stable map corresponding to $R$, so that $[R]=f_*[\PP^1]$.  Since the deformation space has an irreducible component of dimension $2n-2=\dim X-2$, we have, by Proposition \ref{ranprop}, that the curve deforms in its Hodge locus. By assumption, our class $[R]$ is Hodge on all the fibers of $\pi$, since it is proportional to the restriction of the (1,1)-class $c_1(\H)$ on $\X$. Since $T$ is irreducible, this means that $R$ deforms to a chain of rational curves on $X'$.
\end{proof}
%
%

\begin{theorem}\label{sumofrat}
Let $X$ be a IHS variety of K3-type or generalized Kummer type of dimension $2n$, and let $\gamma\in H^{4n-2}(X,\ZZ)$ be a primitive integral Hodge class with $q(\gamma)>0$. Then $\gamma$ is cohomologous to a sum of rational curves. 
\end{theorem}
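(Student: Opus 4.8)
The plan is to reduce the statement, via the deformation machinery already assembled, to the case where $X$ is a Hilbert scheme $S^{[n]}$ (or a generalized Kummer $K_n(S)$) carrying a very special curve class, and then to invoke the explicit Brill--Noether constructions of Theorems \ref{thm:exist_K3} and \ref{thm:exist_GK}. More precisely, since $\gamma$ is a primitive class with $q(\gamma)>0$, I would first pass to its Poincar\'e dual divisor class $D=\phi^{-1}(\gamma)$ (up to the proportionality discussed after \eqref{duality}), which is a primitive Hodge class in $H^2(X,\ZZ)$ with positive square. The point is that the Hodge locus of $\gamma$ coincides with the Hodge locus of $D$, so it suffices to produce a rational $1$-cycle in the class of $\gamma$ on some deformation equivalent pair $(X',D')$ and then transport it back using Lemma \ref{ratcurve}.

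The key normalization step is Lemma \ref{allthemuweneed}: it says that any polarized pair $(X,D)$ of $K3^{[n]}$-type with $D^2=2d$ and divisibility $t$ is deformation equivalent to $(S^{[n]}, tH-\tfrac{\mu}{e}B)$ for a suitable very general polarized K3 surface $(S,H)$ and an integer $2n-1\le \mu\le 3n-3$, whose dual curve class is exactly $H-\mu\tau$. So I would apply this to reduce to that standard model, and then apply Lemma \ref{allthecurvesweneed}: because $q(\gamma)>0$ forces $q(H-\mu\tau)\ge 0$, that lemma guarantees both the existence of a rational curve $R$ in the class $H-\mu\tau$ and a component of $\overline{\M}_0(S^{[n]},[R])$ of dimension exactly $2n-2$. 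Since $H-\mu\tau$ is dual to the polarization, this curve is proportional to $h$ via \eqref{duality}, so it satisfies the hypotheses of Lemma \ref{ratcurve}, and it deforms back along the connecting family to produce an effective rational $1$-cycle in the original class on $X$. The generalized Kummer case runs in parallel, using the analogous construction from Theorem \ref{thm:exist_GK} with $2n+3\le \mu\le 3n+3$.

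The main obstacle I anticipate is bookkeeping the precise relationship between the curve class $\gamma$ and the divisor class produced by the lattice-theoretic reduction. One must check that the divisibility and discriminant-group data of $D$ are genuinely realized by some $tH-\tfrac{\mu}{e}B$ in the allowed range of $\mu$, which is exactly what Eichler's criterion (Lemma \ref{eichler}) is designed to supply; the care is in confirming that the resulting curve class is honestly $H-\mu\tau$ and that $e=\gcd(2n-2,\mu)$ matches the integrality of $\gamma$. A secondary subtlety is that the constructed cycle is a priori only a chain of rational curves in the correct \emph{cohomology} class rather than a single irreducible rational curve, but since the theorem only asserts that $\gamma$ is cohomologous to a sum of rational curves, this is harmless. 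Finally, one should verify that the positivity hypothesis $q(\gamma)>0$ translates correctly through $\phi$ into the inequality $d+1\ge \mu-(n-1)$ needed to invoke the Severi variety nonemptiness in Lemma \ref{allthecurvesweneed}; this is the single computation where the strict positivity is actually consumed.
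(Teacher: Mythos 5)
Your proposal is correct and follows essentially the same route as the paper: reduce via Lemma \ref{allthemuweneed} to a pair $(S^{[n]}, tH-\tfrac{\mu}{e}B)$ with $2n-1\le\mu\le 3n-3$ (resp.\ the Kummer range), produce the dual rational curve $H-\mu\tau$ moving in a $(2n-2)$-dimensional family via Lemma \ref{allthecurvesweneed}, and deform it back along the connecting family using Lemma \ref{ratcurve}, concluding by primitivity of $\gamma$. The subtleties you flag (divisibility bookkeeping via Eichler's criterion, the cycle degenerating to a chain of rational curves, and the translation of $q(\gamma)>0$ into the Severi-variety nonemptiness bound) are exactly the points the paper's lemmas are designed to absorb.
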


\begin{proof}
Let $\gamma$ be such a class and let  $D$ the primitive divisor proportional to it (under the embedding of \eqref{duality}). Since $q(D)>0$, there is a deformation $\pi:\X\to T$ of $X$ over an irreducible curve $T$; points $0,1\in T$; and a divisor class $\D$ on $\X$, so that $(\X_0,\D_0)=(X,D)$ and $(\X_1,\D_1)$ is either the Hilbert scheme of a K3 surface $S$ or a generalized Kummer variety of an abelian surface $S$ and we can suppose such a surface has Picard rank one. 


The divisor $\D_1$ can be chosen to be of the form $tH-\mu\tau$ for $H\in \Pic(S)$ primitive and integers $t,\mu$ satisfying $2n-1\leq \mu \leq 3n-3$ in the $K3^{[n]}$ case and $2n+3\leq \mu \leq 3n+3$ in the Kummer case by Lemma \ref{allthemuweneed}. 


 
 Therefore, we can use the curves constructed in Lemma \ref{allthecurvesweneed} so that all possible monodromy orbits are covered. Let us call this rational curve $R'$ on $\X_1$ so that $[R']$ is primitive and proportional to $\D_1$ (via the embedding \eqref{duality}). By construction, there is a corresponding component of $\overline{\mathcal M}_0(X,[R'])$ has dimension exactly $2n-2$. Since $\D_1$ is the restriction of $[\D]$ on $\X$, the class $[R]$ is Hodge in the fibers of $\pi$, and so Lemma \ref{ratcurve} shows that the map $f:\PP^1\to \X$ deforms in a family dominating $T$. In particular, this means that $R$ deforms to a 1-cycle $R'$ on $X$, all of whose components are rational. By construction, the class of $R$ this is a multiple of $\gamma$, and so by primitivity $[R]=\gamma$. This completes the proof of the theorem.
\end{proof}

With this, we can prove our main theorem:
\begin{proof}[Proof of Theorem 0.1]
The group of integral degree $4n-2$ Hodge classes is generated by primitive classes with positive Beauville--Bogomolov square, and these have algebraic representatives by the previous theorem.
\end{proof}

The proof of the above theorem can likely be applied to other situations. Indeed, the key ingredient of the proof is a statement similar to a conjecture of Voisin \cite[Conj. 3.1 and Remark 3.2]{voi_conj}, namely the following:

\begin{conj}\label{vois_strong}
Let $X$ be a projective IHS variety of dimension $2n$. Then there is a primitive rational curve on $X$ which moves in a $(2n-2)$-dimensional family.
\end{conj}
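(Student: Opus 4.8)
\emph{Proof proposal.} The natural plan is to run the same three-step template that proves the result for the K3 and Kummer types. First, note that the target property is robust under the deformation machinery already set up: by Proposition \ref{ranprop} and Lemma \ref{ratcurve}, if a \emph{single} projective member $X_0$ of a given deformation type carries a primitive rational curve $R$ with $q([R])>0$ moving in a family of dimension exactly $2n-2$, then every polarized member in that deformation type inherits such a curve in the appropriate Hodge locus. Thus it suffices to produce one good model per deformation type. Second, one reduces to a convenient model by a lattice-theoretic argument in the spirit of Lemma \ref{allthemuweneed}, deforming $(X,D)$, for a suitable polarization $D$ of positive square, to a standard member on which the curve can be written down explicitly. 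Third, one transports the curve back along the family. Since $H^{4n-2}(X,\ZZ)$ is generated by primitive classes of positive square (as in the proof of Theorem \ref{IHC}), this would suffice.

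Concretely, all currently known projective IHS varieties are deformation equivalent to one of $S^{[n]}$, $K_n(S)$, or O'Grady's two sporadic examples OG6 and OG10. The first two cases are exactly Theorems \ref{IHC}--\ref{integralmori}, so modulo the (open) completeness of the deformation-type classification, the remaining task is to construct the required primitive rational curves on OG6 and OG10. For these I would use O'Grady's realization of the models as symplectic resolutions of singular moduli spaces of sheaves on a K3 (respectively abelian) surface $S$, and attempt to manufacture rational curves from Brill--Noether and incidence data on $S$, exactly as the curves $R_C$ in \eqref{incidence} arise for Hilbert schemes; the dimension count $2n-2$ should again be governed by a Severi-type variety on $S$. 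An attractive alternative, closer to Voisin's original formulation, is to produce a \emph{uniruled divisor} $D\subset X$: its minimal ruling is automatically a $(2n-2)$-dimensional family of rational curves, since $\dim D=2n-1$ and each ruling curve is one-dimensional, and one would only need to arrange that the class of a ruling curve is primitive and of positive square (so that it is proportional via \eqref{duality} to an ample class).

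The hard part is twofold. First, ``all projective IHS varieties'' cannot literally be reduced to finitely many models, because the classification of deformation types is itself open; a genuinely uniform argument, presumably via the uniruled-divisor route, would be needed to circumvent this. Second, even granting the known classification, the explicit construction on OG6 and OG10 is substantially more delicate than in the Hilbert scheme and Kummer cases: the underlying moduli spaces are singular, their symplectic resolutions introduce extra exceptional loci that interact with the identification \eqref{duality}, and the Brill--Noether theory on $S$ needed to control the dimension of the family of curves is not available in the form of Theorems \ref{thm:exist_K3}--\ref{thm:exist_GK}. I expect this curve-construction step---or, on the uniform route, the existence of a uniruled divisor whose ruling class is primitive and of positive square---to be the principal obstacle.
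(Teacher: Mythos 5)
This statement is labelled as a \emph{conjecture} in the paper, and the paper offers no proof of it: the authors only record that it is known for varieties of K3 type and of generalized Kummer type (citing \cite[Theorem 5.1]{MP2}, building on \cite{CP} and \cite{MP}), and then use it as a \emph{hypothesis} in the theorem that follows. Your proposal correctly recognizes this situation, and for the two known deformation types your outline matches what is actually done: produce special primitive rational curves on the model $S^{[n]}$ or $K_n(S)$ via Brill--Noether loci on nodal curves (Theorems \ref{thm:exist_K3} and \ref{thm:exist_GK}), check the family has dimension exactly $2n-2$, cover all monodromy orbits by the lattice argument of Lemma \ref{allthemuweneed}, and transport via Proposition \ref{ranprop}. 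Your identification of the genuine obstacles --- the open classification of deformation types, and the absence of the requisite curve constructions on OG6 and OG10 --- is accurate; no complete proof exists, and none should be claimed.

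Two caveats on the details. First, be careful with your ``attractive alternative'' via uniruled divisors: the paper's own remark (citing \cite[Appendix A.3]{oberdieck}) points out that some of the primitive rational curves needed in Lemma \ref{allthecurvesweneed} provably do \emph{not} cover divisors, so while a ruling of a uniruled divisor does move in a $(2n-2)$-dimensional family, the uniruled-divisor route cannot by itself supply curves in every monodromy orbit of primitive classes; it could at best establish the conjecture as literally stated (existence of \emph{one} such curve), not the stronger statement needed for the integral Hodge conjecture argument. Second, for the application one needs the conjecture for \emph{generic projective deformations} of $X$ (so that one can pass to Picard rank one and use proportionality to the polarization); your first step should be phrased at that level of generality rather than for a single member of the deformation type.
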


This conjecture has been proved for varieties of K3--type and Kummer type in \cite[Theorem 5.1]{MP2} following ideas contained in \cite{CP} and \cite{MP}. 

\begin{theorem}
Let $X$ be a IHS variety such that Conjecture \ref{vois_strong} holds for generic projective deformations of $X$. Then the integral Hodge conjecture holds for 1-cycles on $X$.
\end{theorem}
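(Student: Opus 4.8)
The plan is to reduce the general statement to Theorem \ref{sumofrat}, which already handles primitive classes of positive square, by replacing the explicit curves produced in Lemma \ref{allthecurvesweneed} with the abstract existence statement of Conjecture \ref{vois_strong}. Concretely, suppose $X$ is an IHS variety of dimension $2n$ for which Conjecture \ref{vois_strong} holds on generic projective deformations, and let $\gamma\in H^{4n-2}(X,\ZZ)$ be a primitive integral Hodge class with $q(\gamma)>0$. As in the proof of Theorem \ref{sumofrat}, let $D$ be the primitive divisor proportional to $\gamma$ via the embedding \eqref{duality}; since $q(D)=q(\gamma)>0$, the class $D$ is (up to sign) a polarization after a small deformation, so I would first deform the pair $(X,D)$ along an irreducible curve $T$ to a \emph{generic} projectively polarized deformation $(\X_1,\D_1)$, keeping $D$ of Hodge type throughout. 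The point of passing to a generic member is precisely that Conjecture \ref{vois_strong} is assumed to hold there.

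On this generic fibre $\X_1$, Conjecture \ref{vois_strong} furnishes a primitive rational curve $R'$ moving in a $(2n-2)$-dimensional family. The first thing I would need to check is that the class $[R']$ can be taken proportional to $\D_1$ (equivalently, to $\gamma$ after transport along the family). Here the monodromy action is the key tool: the isometries in $\widetilde{O}^+$ act transitively on primitive vectors of fixed square and fixed divisibility class by Eichler's criterion (Lemma \ref{eichler}), exactly as exploited in Lemma \ref{allthemuweneed}. So after applying a monodromy operator — which does not change the abstract deformation type and carries rational curves to rational curves — I can arrange that the primitive moving curve $R'$ produced by the conjecture lies in the monodromy orbit of the primitive class dual to $\D_1$. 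This replaces the role played by the explicit Brill--Noether curves of Theorem \ref{thm:exist_K3} and Theorem \ref{thm:exist_GK} in the original argument.

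Once $[R']$ is proportional to $\D_1$ and moves in a family of dimension exactly $2n-2$, the hypotheses of Proposition \ref{ranprop} are met, so $R'$ deforms in its Hodge locus; and since $\D_1=\D|_{\X_1}$ is the restriction of a global $(1,1)$-class, the dual curve class stays Hodge on every fibre of $\pi$. Lemma \ref{ratcurve} then lets me transport $R'$ back along $T$ to an effective $1$-cycle on $X$ with rational components whose class is a positive multiple of the primitive class $\gamma$; by primitivity the multiple is $1$, so $\gamma$ is represented by a sum of rational curves. This establishes the analogue of Theorem \ref{sumofrat} for $X$, and then the final conclusion follows verbatim from the proof of Theorem \ref{IHC}: the group $H^{2n-1,2n-1}(X,\ZZ)=H^{4n-2}(X,\ZZ)\cap H^{2n-1,2n-1}$ is generated by primitive classes of positive Beauville--Bogomolov square, each of which is now algebraic.

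The main obstacle I anticipate is the monodromy/proportionality step in the second paragraph. Conjecture \ref{vois_strong} only asserts the existence of \emph{some} primitive moving rational curve, with no control on its Beauville--Bogomolov square or divisibility, whereas I need a curve dual to the specific class $\gamma$. Making the monodromy-orbit argument work requires knowing that the moving-curve class and the target class share the same square and the same image in the discriminant group $A_{\Lambda}$ — and more delicately, that the $(2n-2)$-dimensional family persists after applying monodromy and after specializing the polarization. For the two cases treated in the paper this is guaranteed by the explicit lattice computations, but in the abstract setting one must either restrict to classes $\gamma$ whose square and divisibility match those realized by the conjectural curve, or invoke an additional transitivity input; clarifying exactly which primitive classes are reachable is the crux of the argument.
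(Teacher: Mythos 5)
Your overall strategy --- deform $(X,H)$ to a generic projective member, invoke Conjecture \ref{vois_strong} there, and transport the moving rational curve back via Proposition \ref{ranprop} and Lemma \ref{ratcurve} --- is exactly the paper's route. But the step you single out as the crux, namely arranging by monodromy that the primitive moving curve $R'$ is proportional to $\D_1$, is where your write-up has a genuine gap, and it is a gap of your own making: as you yourself observe, the conjecture gives no control on the square or the divisibility of $[R']$, so Eichler's criterion (Lemma \ref{eichler}) cannot be used to move $[R']$ into the orbit of the class dual to $\D_1$; those two classes need not be in the same $\widetilde{O}^+$-orbit at all. Moreover, applying a monodromy operator changes the marking, not the variety, so it would not by itself produce a moving curve in the prescribed class on the prescribed deformation.

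The point you are missing is that no such argument is needed. Take the very general projective deformation $(X',H')$ of $(X,H)$ with $\Pic(X')=\ZZ H'$. Then the lattice of integral Hodge classes in $H^{4n-2}(X',\ZZ)$ has rank one, so \emph{every} algebraic curve class on $X'$ is automatically proportional to $H'$ under the embedding \eqref{duality}; in particular the primitive rational curve supplied by Conjecture \ref{vois_strong} has class equal (up to sign) to the primitive generator of this rank-one lattice, and the transport of $\gamma$ is an integer multiple of it. This is precisely how the paper argues: the proportionality is forced by Picard rank one, not arranged by lattice isometries. With that observation your remaining steps (Hodge-ness of the class along the family because it is proportional to $c_1(\H)$, deformation of the curve by Proposition \ref{ranprop}, and generation of $H^{2n-1,2n-1}(X,\ZZ)$ by primitive classes of positive square) go through as you wrote them.
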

Indeed, starting with $X$ and an integral class $\gamma\in H^{2n-1,2n-1}(X,\ZZ)$ so that $\gamma$ is proportional to a primitive ample divisor class $H\in \Pic(X)$, we can take a very general projective deformation $(X',H')$ of $(X,H)$ so that $\Pic(X')=\ZZ H'$. By the above conjecture, there exists a primitive curve on $X'$ proportional to $H'$ (via the embedding \eqref{duality}). Moreover, this curve moves in a family of dimension exactly $2n-2$. Therefore, by Proposition \ref{ranprop} such a curve deforms to a 1-cycle on $(X,H)$ representing $\gamma$.

\begin{remark}
If $C$ is a rational curve which is the ruling of a uniruled divisor $D\subset X$, then $C$ moves in a family of exactly dimension $2n-2$. However, some of the primitive curves constructed above cannot cover divisors, as was shown in \cite[Appendix A.3]{oberdieck}. In Loc.Cit, there are necessary condition to ensure that a primitive rational curve covers a divisor, and some examples where this conditions are not met are provided. Nevertheless, the curves we construct in Lemma \ref{allthecurvesweneed} are sufficient to prove our claim.
\end{remark}


\begin{remark}
For a non-projective IHS manifold, it is of course not true that the integral degree $4n-2$ Hodge classes are generated by classes of curves. This fails already in dimension two; there exists K\"ahler K3 surfaces with $H^{1,1}(X,\ZZ)=\ZZ \sigma$ for a class with self-intersection $\sigma^2=-4$ (which couldn't possibly be algebraic).
\end{remark}


\section{Proof of Theorem \ref{integralmori}}
To prove Theorem \ref{integralmori}, we need to analyze curves of non-positive Beauville--Bogomolov square. Let us consider first those of negative square. 
\begin{lemma}\label{ratnegative}
Let $Y$ be a smooth non-projective IHS manifold of dimension $2n$, with $\Pic(X)=\ZZ D$ and $q(D)<0$. Let $\gamma$ be the class of a curve on $Y$. Then $\gamma$ is a multiple of a rational curve which moves in a $2n-2$ dimensional family.
\end{lemma}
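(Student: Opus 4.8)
The plan is to reduce everything to a single curve class and then transport an explicit negative rational curve from a projective model back to $Y$. Since $\Pic(Y)=\ZZ D$ has rank one, the group of algebraic curve classes is also of rank one: under the embedding \eqref{duality} it is spanned by the primitive class $\gamma_0$ dual to $D$. Hence every effective class, and in particular $\gamma$, is an integer multiple $k\gamma_0$, and it suffices to exhibit a rational curve of class $\gamma_0$ deforming in a $(2n-2)$-dimensional family. Note that the hypothesis $q(D)<0$ already forces $Y$ to be non-projective, since $\RR D$ then contains no ample class.

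First I would deform the pair $(Y,D)$, keeping $D$ of type $(1,1)$, to a projective model. By Lemma \ref{allthemuweneed} and its Kummer analogue this brings $(Y,D)$ to $(S^{[n]},tH-\tfrac{\mu}{e}B)$ with $2n-1\le\mu\le 3n-3$, whose dual curve is $H-\mu\tau$ and has the same negative square as $\gamma$. On this explicit model I want a rational curve of class $H-\mu\tau$ on a component $\M\subseteq\overline{\M}_0(X,[H-\mu\tau])$ of dimension $2n-2$. In the mild range where the geometric genus $\mu-n+1$ does not exceed the arithmetic genus $d+1$ of $|H|$, the argument proving Lemma \ref{allthecurvesweneed} applies unchanged: the only role played there by positivity of $q$ was to guarantee the existence of nodal curves of that genus, while the bound \eqref{eq:boundA} is trivially satisfied ($\alpha=0$) throughout $2n-1\le\mu\le 3n-3$, and the same count still yields a family of dimension exactly $2n-2$.

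The crux is the genuinely negative range, where $\mu-n+1>d+1$, so that no curve of $|H|$ has geometric genus $\mu-n+1$ and the Brill--Noether construction breaks down. Here I would instead realize $\gamma_0$ as the class of the rational curve generating the extremal ray contracted by the birational (divisorial or flopping) contraction of $S^{[n]}$ attached to the negative class $tH-\tfrac{\mu}{e}B$. For K3-type and generalized Kummer type these contractions are classified: using Eichler's criterion (Lemma \ref{eichler}) one reduces, within a fixed square and divisibility, to finitely many standard wall classes --- for instance the exceptional ruling $\tau$ of the Hilbert--Chow divisor --- each of which is represented by an explicit rational curve. The content is that such an extremal negative curve moves in a family of dimension \emph{exactly} $2n-2$: the general lower bound of $2n-2$ recalled before Proposition \ref{ranprop} gives one inequality, and the matching upper bound uses the rigidity of the curve on a very general member. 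This last point --- controlling both the class and the exact deformation dimension of the negative extremal curve --- is the main obstacle, and it is here that the finiteness (boundedness of the square) of the possible negative classes, in the spirit of the MBM-class theory, is needed to reduce to the explicit standard cases.

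Finally, having produced on the projective model a rational curve $R$ of class $\gamma_0$ on a component of $\overline{\M}_0(X,[R])$ of dimension $2n-2$, I would transport it back to $Y$ via Proposition \ref{ranprop}. Since $[R]$ is proportional to the divisor class deforming to $D$, it remains of type $(2n-1,2n-1)$ along the whole family; hence $R$ deforms in its Hodge locus to a rational curve on $Y$ of class $\gamma_0$, still moving in a $(2n-2)$-dimensional family. Then $\gamma=k\gamma_0$ is the asserted multiple of a rational curve, which completes the proof.
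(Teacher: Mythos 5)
Your proposal contains a genuine gap, and you have located it yourself: the ``crux'' case where you must produce, in the negative class, a rational curve lying on a component of $\overline{\M}_0$ of dimension exactly $2n-2$ is exactly the content of the lemma, and your sketch does not close it. Saying that the extremal contractions ``are classified'', that Eichler's criterion reduces to ``finitely many standard wall classes'', and that the upper bound on the deformation dimension follows from ``rigidity on a very general member'' names the difficulty without resolving it; in particular rigidity of the curve is not an input one has available, it is precisely what must be proved. The paper closes this by a different and more direct route that never passes through $S^{[n]}$ or Brill--Noether theory: since $\gamma$ is effective of negative square on a Picard-rank-one manifold, $D$ is a wall (MBM) divisor; this property is deformation invariant, so a small projective deformation admits a contraction of $\gamma$, and by Bakker--Lehn this descends to a contraction $\phi:Y\to X$ on $Y$ itself. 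Wierzba's structure theorem then says the normalization of any component of a general fiber of $\phi$ is a $\PP^e$; taking $l$ the line class, the Cho--Miyaoka--Shepherd-Barron computation $N_f=\O_{\PP^1}(-2)\oplus \O_{\PP^1}(-1)^{e-1}\oplus \O_{\PP^1}^{2n-2e}\oplus \O_{\PP^1}(1)^{e-1}$ gives $h^0(N_f)-h^1(N_f)+1=2n-2$ exactly. None of these inputs appear in your argument, and without them (or a substitute) the key dimension statement is unsupported.

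Two smaller points. First, your reduction ``it suffices to represent the primitive class $\gamma_0$ by a rational curve'' is stronger than what is needed and not obviously achievable: the paper instead shows directly that $\gamma=a\,\eta_*(l)$ for the line class $l$ in a fiber, and that $a=\eta^*(\gamma)\cdot l$ is an \emph{integer}, which is all the lemma asserts; the primitive generator of the rank-one lattice of Hodge curve classes need not itself be effective. Second, your ``mild range'' Brill--Noether case is not needed once the contraction argument is in place, and transporting a Severi-variety curve of negative square back to $Y$ would in any case require knowing that its class remains effective and extremal on $Y$, which again is governed by the MBM machinery you would be trying to avoid.
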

\begin{proof}
As $\gamma$ is effective and of negative square in a Picard rank one manifold, the divisor class $D$ is a wall divisor (or MBM class) in the sense of \cite[Definition 1.2]{mo_kahl}. Being a wall divisor is preserved by deformations in the Hodge locus of $\gamma$ by \cite[Theorem 1.3]{mo_kahl}, so we can take a projective small deformation $Y'$ of $Y$ such that $\gamma$ is contracted by a map $Y'\rightarrow X'$ by \cite[Theorem 2.5]{KLM2}. Now, this implies that there exists a map $\phi:Y\rightarrow X$ to a singular symplectic manifold $X$ which contracts $\gamma$ without taking any deformation (see \cite[Theorem 1.1]{BL}).

Let $F$ be a general fibre of the exceptional locus of $\phi$. By \cite[Theorem 1.3 (iii)]{Wie}, the normalization of any component of $F$ is a projective space $\PP^e$. Let $K$ be such a component and let $\eta: \PP^e\to K$ be the normalization map. Let $l\in H_2(\PP^e,\ZZ)$ be the class of a line. By hypothesis, we have $\gamma=a\eta_*(l)$ as a class in $H_2(Y,\QQ)$ for some $a\in \mathbb{Q}$. Here $a$ is actually an integer: $\eta^*(\gamma)\cdot l=a \eta^*\eta_*(l) \cdot l=a$. Hence $\gamma$ is an integral multiple of a rational curve.


Finally, if $f:\PP^1\to X$ is the map corresponding to $l$, we have by \cite[Lemma 9.4]{CMSB} that $N_f=\O_{\PP^1}(-2)\oplus \O_{\PP^1}(-1)^{e-1}\oplus \O_{\PP^1}^{2n-2e}\oplus \O_{\PP^1}(1)^{e-1}$. In particular, $f$ deforms in a family of dimension $h^0(N_f)-h^1(N_f)+1=2n-2$, as desired. (See also \cite[Proposition 3]{BHT}).
\end{proof}

\begin{lemma}\label{ratzero}
Let $Y$ be a smooth non-projective hyperk\"ahler manifold of K3 or Kummer type, with $\Pic(X)=\ZZ D$ with $D^2=0$. Let $\gamma$ be the class of a curve on $Y$. Then $\gamma$ is a multiple of a rational curve which moves in a $(2n-2)$-dimensional family.
\end{lemma}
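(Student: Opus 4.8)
The plan is to run the argument of Theorem~\ref{sumofrat} at the boundary of the positive cone, the one genuinely new feature being that $Y$ is now forced to be non-projective. Choose the primitive generator $D$ of $\Pic(Y)=\ZZ D$ on the nef side of the isotropic ray, and let $\beta\in H_2(Y,\ZZ)$ be the primitive effective class on the ray $\QQ_{\geq 0}\,\phi(D)$ (cf. \eqref{duality}); since $\gamma$ is an effective class with $q(\gamma)=0$ it is proportional to $\phi(D)$, so $\gamma=m\beta$ for some $m\in\ZZ_{>0}$. It therefore suffices to produce a rational curve of class $\beta$ moving in a family of dimension exactly $2n-2$. Note that no Picard-rank-one member of the deformation family can be projective here, since an ample class would have positive square; this is precisely what prevents a direct appeal to Theorem~\ref{sumofrat}, whose deformation argument is phrased for projective families.

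First I would deform $(Y,D)$ inside the Hodge locus of $D$ to an explicit model. As in Lemma~\ref{allthemuweneed}, but now for the isotropic class $D$, I would use Verbitsky's Torelli theorem together with Markman's monodromy results and Eichler's criterion (Lemma~\ref{eichler}) to connect $(Y,D)$ by a (K\"ahler, non-projective) deformation to a projective pair $(S^{[n]},D_1)$ in the K3 case, or $(K_n(S),D_1)$ in the Kummer case, where $S$ has Picard rank one, $D_1$ is primitive isotropic, and the curve class dual to $D_1$ has the form $H-\mu\tau$ with $2n-1\leq\mu\leq 3n-3$ (respectively $H-\mu\eta$ with $2n+3\leq\mu\leq 3n+3$). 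Throughout such a deformation $D$, and hence $\beta$, stays of type $(1,1)$, so $\beta$ is Hodge on every fibre; the generic fibre has Picard rank one and is non-projective, while the special fibre $S^{[n]}$ (resp.\ $K_n(S)$) is projective because $H^2>0$ supplies an ample class.

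On the projective model I would apply Lemma~\ref{allthecurvesweneed} (resp.\ its generalized-Kummer analogue) with $q(R)=0$; this is the boundary case $q(R)\geq 0$ that is explicitly permitted there, so one obtains a rational curve $R$ of class $\beta$ together with a component $\M\subseteq\overline{\mathcal M}_0(S^{[n]},\beta)$ of dimension exactly $2n-2$. Geometrically this is reassuring: in the K3 case the simplest such $R$ arises from a $\g^1_n$ on the normalization of a nodal fibre of an elliptic pencil, and in the Kummer case from a $\g^1_{n+1}$ on a nodal curve in $\{H\}$, so the construction is non-vacuous and the abelian surface itself need not carry rational curves. Since $\beta$ is Hodge on every fibre of the deformation and $R$ moves in a family of the critical dimension $2n-2$, Proposition~\ref{ranprop} lets $R$ deform; as in the proofs of Theorems~\ref{IHC} and \ref{sumofrat}, the locus over which it deforms is open (by Proposition~\ref{ranprop}) and closed (by properness of $\overline{\mathcal M}_0$) in the connected Hodge locus, so $R$ propagates from $S^{[n]}$ all the way to $Y$, yielding a rational curve $R'\subset Y$ of class $\beta$ moving in a $(2n-2)$-dimensional family. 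Then $\gamma=m[R']$, as required.

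The main obstacle is the non-projectivity, which forces everything into the K\"ahler rather than the projective category. Concretely, the pair-deformation of Lemma~\ref{allthemuweneed} must be extended from polarizing classes to the isotropic class $D$ on the boundary of the positive cone (where the results of Markman and Apostolov that it rests on are usually invoked for ample classes), and Ran's deformation of rational curves in Proposition~\ref{ranprop} must be run along a family whose general member is non-algebraic. Both ingredients are available---Verbitsky's global Torelli theorem and the monodromy and lattice theory hold verbatim in the K\"ahler setting, and Ran's deformation theory is analytic---but the care lies in checking that the correct monodromy orbit of primitive isotropic classes is reached and that $R$ stays within the component of the moduli space of the expected dimension along the entire non-projective path. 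This $q=0$ argument is the Lagrangian-fibration boundary analogue of the wall-divisor contraction of \cite{mo_kahl,BL,Wie} used in Lemma~\ref{ratnegative}.
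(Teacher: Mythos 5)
Your proposal is correct and follows essentially the same route as the paper: deform $(Y,D)$ to a Picard-rank-one model $(S^{[n]},H-\mu\tau)$ with $2n-1\leq\mu\leq 3n-3$ using the classification of monodromy orbits of primitive isotropic classes (the paper cites Markman and Wieneck for this, where you invoke Eichler's criterion directly), produce the rational curve via the boundary case $q(R)=0$ of Lemma~\ref{allthecurvesweneed}, and propagate it back to $Y$ with Proposition~\ref{ranprop}. Your extra care about the non-projective/K\"ahler setting is reasonable but not something the paper dwells on, since Ran's deformation theory and the local Hodge-locus argument are analytic.
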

\begin{proof}
We can assume that the divisor $D$ is effective, so that it defines a lagrangian fibration structure on $X$ (see \cite[Theorem 1.3]{Marklagr} and \cite[Corollary 1.1]{matsu}).

 We will for simplicity consider only the case when $X$ has K3-type. The rest of the proof goes through almost verbatim for generalized Kummer varieties.
 
First, we want to deform $Y$ to a general Hilbert scheme $S^{[n]}$ where $S$ is a K3 surface. Let $C=\gamma/r$, $r>0$ be the primitive cohomology class. It suffices to produce a pair $(S^{[n]},H-g\tau)$, where $H-g\tau$ is the class of a rational curve, for every component of the moduli space of pairs $(Y,C)$. In this class of square zero curves (actually, divisors) these components have been determined by Markman \cite[Lemma 2.5]{Marklagr} and Wieneck \cite[Lemma 5.12]{benkum}. All monodromy orbits contain an element of the form $(S^{[n]},H-b\tau)$. In particular, it suffices to take $2(n-1)< b\leq 3(n-1)$. Therefore, we can again use Lemma \ref{allthecurvesweneed} to produce a rational curve of class $H-b\tau$ which moves in a $2n-2$ dimensional family. By Proposition \ref{ranprop}, this curve deforms to the desired curve on $Y$.\end{proof}



\begin{proof}[Proof of Theorem \ref{integralmori}]
Let $X$ be a IHS manifold of K3 or Kummer type and let $C$ be a curve on $X$. If $q(C)>0$, then $C$ is an integral sum of rational curves by Theorem \ref{sumofrat}. If $q(C)=0$, the same holds by Lemma \ref{ratzero} after going to a generic deformation as in Lemma \ref{ratcurve}. We are left with the case $q(C)<0$. If $C$ is extremal, we can apply Lemma \ref{ratnegative} to conclude that $C$ is a multiple of a rational curve with the deformation argument of Lemma \ref{ratcurve}. Otherwise, we can apply the Cone theorem of \cite[Proposition 11]{ht_moving}, so that $C$ can be written as a rational sum of extremal curves $C=\sum a_iR_i$, where all $R_i$ are primitive rational curves by Lemma \ref{ratnegative}. 



We show that the coefficients $a_i$ are in fact integers. Fix an integer $i$ and let us take a divisor $D_i\in \Pic(X)$ which is effective and such that $D_i\cdot R_i>0$, $D_i\cdot R_j< 0$ if $i\neq j$. Such a divisor can be found inside the big cone intersected with the open subsets $\{D\in N^1(X)\,|\,D\cdot R_j<0\}$ and $\{D\in N^1(X)\,|\,D\cdot R_i>0\}$. 

As a small enough multiple of $D_i$ is klt by \cite[Remark 12]{ht_moving}, we can run the MMP to the pair $(X,D_i)$ contracting negative curves in any order, as every MMP terminates by \cite[Theorem 4.1]{lp_mmp}. Thus, there exists a variety $Y$ and a map $X\rightarrow Y$ which contracts all curves whose classes are multiples of the $R_j$ $j\neq i$. Therefore, the class of the pushforward of $C$ to $Y$ is $a_iR_i$, where we keep denoting by $R_i$ its pushforward to $Y$. As $R_i$ is primitive, $a_i\in\ZZ$, and hence the desired conclusion holds.  
\end{proof}
\section{Application to cubic fourfolds}\label{applications}
As an application of Theorem \ref{IHC}, we give a quick proof of the following result, which was proved by Voisin \cite{Voi2}, using a Lefschetz pencil-type argument.
\begin{theorem}\label{cubicIHC}
Let $X\subset \PP^5$ be a cubic fourfold. Then the integral Hodge conjecture holds for $H^4(X,\ZZ)$. In fact, $H^{2,2}(X,\ZZ)$ is generated by classes of rational surfaces.
\end{theorem}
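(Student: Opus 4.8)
The plan is to relate the integral Hodge conjecture for $2$-cycles on a cubic fourfold $X$ to the integral Hodge conjecture for $1$-cycles on its Fano variety of lines $F$, which is an IHS variety of K3-type and hence covered by Theorem \ref{IHC}. The bridge is the incidence correspondence
\begin{equation*}
P = \{(\ell, x) \in F \times X \mid x \in \ell\}
\end{equation*}
with its two projections $p: P \to F$ and $q: P \to X$. This is a $\PP^1$-bundle over $F$ via $p$, and $q$ is a $\PP^2$-bundle map of relative dimension $2$ (realizing $X$ as the image of the universal line). The associated Abel--Jacobi--type map $\Phi = q_* p^* : H^{2n-2}(F,\ZZ) \to H^4(X,\ZZ)$ (here $2n = 4$, so we are mapping $H^2(F,\ZZ) \to H^4(X,\ZZ)$, and dually $H_2(F,\ZZ) \to H_4(X,\ZZ)$ at the level of curve classes on $F$ going to surface classes on $X$) is a morphism of Hodge structures, and I would first record the two essential features: it sends algebraic classes to algebraic classes (since $P$ is algebraic and $p^*$, $q_*$ preserve algebraicity), and it is surjective onto the relevant integral Hodge classes.

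First I would set up the cohomological computation showing surjectivity of $\Phi$ on integral Hodge classes. By Theorem \ref{IHC}, $H^{2,2}(F,\ZZ) = H^{3,3}(F,\ZZ)$ (the group of degree $4n-2 = 6$ Hodge classes, i.e.\ $1$-cycles on the fourfold $F$) is generated by classes of rational curves, all of which are algebraic. Next I would verify that $\Phi$ carries this group \emph{onto} $H^{2,2}(X,\ZZ)$. The key input is the known description of $H^4(X,\ZZ)$ as a Hodge structure and the fact, due to Beauville--Donagi, that $q^* p_*$ (or its transpose) induces an isomorphism of the primitive/transcendental parts of the Hodge structures $H^4(X)_{\mathrm{prim}} \cong H^2(F)_{\mathrm{prim}}(-1)$. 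I would use this to show that every integral Hodge class on $X$ lifts, at least rationally, to an integral Hodge class on $F$; the integrality of the lift is the delicate point and would be handled by analyzing the cokernel, which should be generated by obviously algebraic classes such as $h^2$ (where $h$ is the hyperplane class).

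The key steps, in order: (1) establish that $\Phi$ and its transpose are morphisms of integral Hodge structures preserving algebraicity; (2) compute the effect of $\Phi$ on the decomposition $H^4(X,\ZZ) = \ZZ h^2 \oplus H^4_{\mathrm{prim}}(X,\ZZ)$, identifying the image lattice; (3) invoke Theorem \ref{IHC} to write an arbitrary Hodge class in $H^{3,3}(F,\ZZ)$ as an integral combination of rational curves $[R_i]$; (4) push these forward via $\Phi$ to get algebraic surfaces on $X$, and check that $h^2$ itself (a plane section squared, manifestly algebraic) together with the image accounts for all of $H^{2,2}(X,\ZZ)$ integrally. The geometric payoff is that the image surfaces are rational, since they arise as $q(p^{-1}(R_i))$ for $R_i$ a rational curve, giving a $\PP^1$-bundle over $\PP^1$ mapped into $X$, whose image is a rational surface; this yields the stronger statement that $H^{2,2}(X,\ZZ)$ is generated by classes of rational surfaces.

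The main obstacle I anticipate is controlling the integrality and surjectivity of $\Phi$ on the level of lattices, rather than merely over $\QQ$. Surjectivity over $\QQ$ follows formally from the Beauville--Donagi isomorphism and the hard Lefschetz / primitive decomposition, but an integral Hodge class on $X$ need not a priori lift to an integral class on $F$ under $\Phi$; one must show the cokernel of $\Phi$ on integral Hodge classes is generated by algebraic (indeed rational-surface) classes. I expect this to reduce to a finite, explicit computation with the discriminant of the Beauville--Bogomolov form on $F$ and the intersection form on $H^4(X,\ZZ)$, together with the observation that the potentially problematic quotient is small (cyclic of controlled order) and spanned by the evidently algebraic class $h^2$. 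Once this lattice-index bookkeeping is pinned down, combining it with Theorem \ref{IHC} closes the argument.
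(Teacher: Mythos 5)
Your overall strategy is the same as the paper's: push integral Hodge classes through the incidence correspondence $P\subset F\times X$, apply Theorem \ref{IHC} on the Fano variety $F$, and observe that rational curves on $F$ sweep out rational surfaces on $X$. However, the step you yourself identify as ``the delicate point'' --- integral surjectivity of $\Phi=q_*p^*$ onto $H^{2,2}(X,\ZZ)$ --- is left as an anticipated obstacle, to be resolved by a primitive-decomposition argument plus a discriminant/cokernel computation that you do not carry out. As written, this is a genuine gap: you only assert an isomorphism on primitive (or transcendental) parts over $\QQ$ and then hope the integral cokernel is spanned by $h^2$. The paper closes this gap in one stroke by citing the stronger form of Beauville--Donagi \cite[Proposition 4]{BD}: the Abel--Jacobi map is an isomorphism of \emph{integral} Hodge structures $H^4(X,\ZZ)\cong H^2(F,\ZZ)$ (not merely of primitive parts over $\QQ$), and since both lattices are unimodular its transpose $\alpha=q_*p^*:H^6(F,\ZZ)\to H^4(X,\ZZ)$ is again an isomorphism of integral Hodge structures. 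Hence every class in $H^{2,2}(X,\ZZ)$ is $\alpha(\gamma)$ for some $\gamma\in H^{3,3}(F,\ZZ)$, and no lattice-index bookkeeping, no separate treatment of $h^2$, and no analysis of a cokernel are needed.

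Two smaller inaccuracies in your setup are worth correcting. First, $q:P\to X$ is not a $\PP^2$-bundle of relative dimension $2$: since $P$ is a $\PP^1$-bundle over the fourfold $F$, we have $\dim P=5$, so $q$ has relative dimension $1$ (the lines through a general point of a cubic fourfold form a curve), and $q_*$ lowers cohomological degree by $2$, which is exactly what makes $q_*p^*$ land in $H^4(X,\ZZ)$ starting from $H^6(F,\ZZ)$. Second, the source of $\Phi$ is $H^6(F,\ZZ)\cong H_2(F,\ZZ)$, not $H^2(F,\ZZ)$; your homological description (curve classes on $F$ mapping to surface classes on $X$) is the correct one, and the final geometric point --- that $q(p^{-1}(R))$ is a rational surface when $R$ is a rational curve --- matches the paper and gives the last assertion of the theorem.
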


\begin{proof}Let $F=F(X)$ denote the variety of lines on $X$ and let $P\subset F\times X$ denote the incidence correspondence, with projections $p:P\to F$ and $q:P\to X$. We will consider the (transpose of the) Abel--Jacobi map induced by $P$, namely $$\alpha=q_*p^*:H^6(F,\ZZ)\to H^4(X,\ZZ).$$By Beauville--Donagi \cite[Proposition 4]{BD}, this map is an isomorphism. Hence any integral Hodge class $\Gamma\in H^{2,2}(X,\ZZ)$ is the image of a class in $H^{3,3}(F,\ZZ)$, and consequently it is algebraic, by Theorem \ref{IHC}. The last statement also follows, since the incidence correspondence sends rational curves on $F$ to rational surfaces on $X$.\end{proof}
%
%
%
%



\begin{remark}
By Bloch--Srinivas \cite[Theorem 1]{BS} the cycle class map on codimension 2 cycles is injective, so in fact the Chow group $CH^2(X)$ is isomorphic to $H^{2,2}(X,\ZZ)$, and it is generated by cycle classes of rational surfaces. See also the work of Mboro \cite{mboro} for similar statements for $CH_2(X)$ for higher dimensional cubic hypersurfaces.
\end{remark}

%
%
%
%

\begin{remark}

Using results by Shen \cite{Shen} we obtain the following result about the algebraicity of the Beauville--Bogomolov form. Let $X$ and $F$ be as above. The Beauville--Bogomolov form $q$ on $H^2(F,\ZZ)$ defines a class in $H^{12}(F\times F,\ZZ)$ which we will denote by $[q]$. This class is Hodge of type $(6,6)$. Given the validity of the integral Hodge conjecture on $F$, we can rephrase \cite[Proposition 5.5]{Shen} as

\begin{proposition}[Shen]
The class $[q]$ is algebraic if and only if $X$ is $CH_0$-trivial.
\end{proposition}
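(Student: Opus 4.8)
The plan is to transport the algebraicity question from $F\times F$ to $X\times X$ through the incidence correspondence $P$, and there to recognise it as a question about the integral structure of $CH_0(X)$. First I would record that $[q]$, being built from the bilinear form $q$ on $H^2(F,\ZZ)$ via Poincaré duality, lies in $H^6(F)\otimes H^6(F)\subset H^{12}(F\times F)$. By Beauville--Donagi \cite[Proposition 4]{BD} the Abel--Jacobi map $\alpha=q_*p^*\colon H^6(F,\ZZ)\to H^4(X,\ZZ)$ of the proof of Theorem \ref{cubicIHC} is an isomorphism of integral Hodge structures which is, up to a nonzero scalar, an isometry between $q$ and the cup-product form on $H^4(X,\ZZ)$. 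Hence $\alpha\otimes\alpha$ carries $[q]$ to a nonzero multiple of the middle Künneth component $\delta^4_X\in H^4(X)\otimes H^4(X)$ of the class of the diagonal of $X$.

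Next I would use that $X$ is a cubic fourfold: its odd cohomology vanishes, and $H^2(X)$ and $H^6(X)$ are generated by algebraic classes, so every Künneth component of the diagonal other than $\delta^4_X$ is automatically algebraic. Consequently the class of the diagonal of $X$ has all of its Künneth components algebraic precisely when $\delta^4_X$ is algebraic, and the algebraicity of $\delta^4_X$ is exactly what is controlled by the integral structure of $CH_0(X)$, in the spirit of the Bloch--Srinivas decomposition of the diagonal; this is the content of Shen's computation \cite[Proposition 5.5]{Shen}. Since $\alpha$ is induced by the algebraic correspondence $P$, one implication of the transport is free: if $[q]$ is algebraic on $F\times F$, then pushing its cycle representative through $P\times P$ shows $\delta^4_X$ is algebraic on $X\times X$. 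For the reverse implication I would feed Theorem \ref{IHC} into Shen's criterion: the integral Hodge conjecture for $1$-cycles on $F$ is exactly what allows the inverse transport to be carried out with $\ZZ$-coefficients, and with this input his criterion reduces to the integral triviality of $CH_0(X)$. Combining the two equivalences then yields the proposition.

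The hard part will be entirely integral. Rationally there is nothing to prove, since $X$ is rationally connected and so $CH_0(X)_\QQ=\QQ$ and $\delta^4_X$ is already algebraic over $\QQ$. The delicate point is that the transpose correspondence $P^t$ realises only $\alpha^{-1}$ up to the nonzero scalar appearing in Beauville--Donagi's isometry, so dividing by that scalar introduces denominators and the naive inverse transport is not integral. I expect precisely this integral realisation of the inverse transport to be the genuine obstacle, and it is here that the integral Hodge conjecture on $F$ provided by Theorem \ref{IHC} is indispensable; the remaining bookkeeping, matching an integral representative of $\delta^4_X$ with the integral triviality of $CH_0(X)$, is the substance of Shen's Proposition 5.5.
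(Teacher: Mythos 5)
Your proposal matches the paper's treatment: the paper offers no independent proof of this statement, but simply observes that Theorem \ref{IHC} supplies the integral Hodge conjecture for $1$-cycles on $F$, which is precisely the hypothesis under which Shen's Proposition 5.5 can be rephrased in the stated form. Your sketch of the mechanism --- transporting $[q]$ through the incidence correspondence to (a multiple of, modulo the automatically algebraic pieces) the middle K\"unneth component of the diagonal of $X$ and invoking the decomposition of the diagonal --- is a fair gloss on Shen's argument, and you correctly locate both the role of Theorem \ref{IHC} and the integral subtleties in the inverse transport.
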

Here the group $CH_0(X)$ is universally trivial if and only if X admits a Chow-theoretical decomposition of the diagonal (see \cite{Voi2}). See \cite{Shen} for more details.
\end{remark}


\begin{remark}
Mingmin Shen explained to us that one can conversely deduce the integral Hodge conjecture on $F$ from that on $X$ (which holds by Voisin's result \cite{Voisin}). Indeed, in this case  $P_*: CH_1(F)\to CH_2(X)$ is surjective (\cite[Theorem 3.1]{mboro} and \cite[Theorem 1.1]{Shen2} show that $P_*$ is surjective modulo multiples of $h^2$ and a separate argument shows that $h^2$ is also in the image). From this we deduce that the image $\alpha(\gamma)$ of a class $\gamma\in H^{3,3}(F,\ZZ)$ is homologous to a cycle of the form $P_*(\Gamma)$, where $[\Gamma]=\gamma$. In particular $\gamma$ is algebraic.
\end{remark}

{Department of Mathematics, Alma mater studiorum Universit\`a di Bologna, Piazza di Porta san Donato 5, 40126 Bologna, Italy}\\
{{\it Email:} \verb"giovanni.mongardi2@unibo.it"

\bigskip

\noindent {Department of Mathematics, University of Oslo, Box 1053, Blindern, 0316 Oslo, Norway}\\
{{\it Email:} \verb"johnco@math.uio.no"}

\end{document}